 \newtheorem{theorem}{Theorem}[section]
 \newtheorem{corollary}[theorem]{Corollary}
 \newtheorem{lemma}[theorem]{Lemma}
\newcommand{\Tr}{\mathrm{Tr}}
\newcommand{\R}{\mathbb{R}}
\newcommand{\C}{\mathbb{C}}
\newcommand{\State}{\mathcal{S}}
\newcommand{\Proj}{\mathcal{P}}
\newcommand{\Bsa}{\mathcal{B}_{sa}}
\newcommand{\Borel}{\mathfrak{B}}
\newcommand{\N}{\mathbb{N}}
\newcommand{\diam}{\mathrm{diam}}
\newcommand{\Lip}{\mathrm{Lip}}
\newcommand{\ran}{\mathrm{ran}}
\newcommand{\bbE}{\mathbb{E}}
\newcommand{\dm}{\,\,\mathrm{d}}
\newcommand{\eps}{\varepsilon}
\renewcommand{\Re}{\mathrm{Re}}
\newcommand{\px}{{|x\rangle\langle x|}}
\newcommand{\pure}[1]{\left| #1 \left\rangle\right\langle #1 \right|}
\newcommand{\varclass}[1]{\left\llbracket{#1}\right\rrbracket}
\newcommand{\jap}[1]{\langle#1\rangle}
\renewcommand{\[}{\begin{equation}}
\renewcommand{\]}{\end{equation}}
\title{When is the variance of one observable less than or equal to that of another with respect to all quantum states?}
\author[$\dagger$$+$]{Gy\"orgy P\'al Geh\'er}
\author[$*$]{Nazar Miheisi}
\affil[$\dagger$]{Department of Mathematics and Statistics, University of Reading, Whiteknights, P.O. Box 220, Reading RG6 6AX, United Kingdom, e-mail: gehergyuri@gmail.com}
\affil[$+$]{Riverlane, 59 St Andrew's St, Cambridge CB2 3DD, United Kingdom, e-mail: george.geher@riverlane.com}
\affil[$*$]{Department of Mathematics,
King's College London,
Strand, London WC2R 2LS,
United Kingdom,
email:~nazar.miheisi@kcl.ac.uk}
\date{}
\begin{document}

\maketitle

\begin{abstract}
    In quantum mechanics, the well-known Loewner order expresses that one observable's expectation value is less than or equal than that of
    another with respect to all quantum states. In this paper we propose and study a similar order relation in terms of the variance, and we prove two theorems. Our first result states that one observable's variance is less than or equal than that of another with respect to all quantum states if and only if the former is a $1$-Lipschitz function of the latter. The other main result we prove characterises the order automorphisms with respect to this proposed order relation. It turns out that in some sense these automorphisms have a more rigid form than in the case of the Loewner order.
\end{abstract}

\section{Introduction}

Throughout this paper $H$ denotes a complex Hilbert space. In the mathematical formulation of quantum mechanics, all quantum systems can be described using such a Hilbert space. More precisely, for every unit vector $x\in H$ the rank-one projection $\px$ represents a so-called \emph{pure state}, a general \emph{mixed state} is represented by a positive trace-class operator whose trace is $1$, and a (bounded) \emph{observable} corresponds to a (bounded) self-adjoint operator. Let us introduce the notation $\Proj_1(H) := \{\px\colon x\in H, \|x\|=1\}$ for the set of all pure states, $\State(H)$ for the set of all states, and $\Bsa(H)$ for the space of all bounded observables.

Let $\Proj(H)$ stand for the set of all (orthogonal) projections on $H$, and let the Borel $\sigma$-algebra of the real line $\R$ be denoted by $\Borel(\R)$.
Assume that our quantum system is in the state $\rho\in\State(H)$ and that we measure a physical quantity $A\in\Bsa(H)$ whose spectral measure is $E_A\colon \Borel(\R) \to \Proj(H)$. By the \emph{Born rule}, the following defines a Borel probability measure
\begin{equation}\label{eq:Born-rho}
    \mu_{A,\rho}\colon \Borel(\R)\to\R, \;\; \mu_{A,\rho}(\omega) = \Tr (E_A(\omega)\rho)
\end{equation}
such that $\mu_{A,\rho}(\omega)$ is the probability that our measurement produces a value from the set $\omega$. Note that both $E_A$ and $\mu_{A,\rho}$ are concentrated on the spectrum of $A$, which we will denote by $\sigma(A)$ from now on.
The \emph{expectation value} and the \emph{variance} of the observable $A$ in the quantum state $\rho$ are therefore
\begin{equation}\label{eq:exp-rho}
\mathbb{E}_\rho(A) := \Tr(\rho A)
\end{equation}
and 
\begin{equation}\label{eq:var-rho}
\Delta_\rho(A) := \mathbb{E}_\rho(A^2) - \mathbb{E}_\rho(A)^2 = \Tr(\rho A^2) - (\Tr(\rho A))^2,
\end{equation}
respectively. In case when $\rho=\pure{x}$ is a pure state, \eqref{eq:Born-rho}, \eqref{eq:exp-rho} and \eqref{eq:var-rho} become 
\begin{equation}\label{eq:Born}
    \mu_{A,x} := \mu_{A,\px} \colon \Borel(\R)\to\R, \;\; \mu_{A,x}(\omega) = \langle E_A(\omega)x,x\rangle,    
\end{equation}
\begin{equation}\label{eq:exp}
    \mathbb{E}_x(A) := \mathbb{E}_\px(A) = \langle Ax, x\rangle
\end{equation}
and 
\begin{equation}\label{eq:var}
    \Delta_x(A) := \Delta_\px(A) = \langle A^2x, x\rangle - \langle Ax, x\rangle^2.
\end{equation}

For two observables $A,B\in \Bsa(H)$ we write $A\leq B$ if and only if the expectation value of $A$ is less than or equal to that of $B$ with respect to all (pure) states $\rho$; that is,
$$\mathbb{E}_x(A) \leq \mathbb{E}_x(B) \quad (\px\in\Proj_1(H)),$$ 
or equivalently, 
$$\mathbb{E}_\rho(A) \leq \mathbb{E}_\rho(B) \quad (\rho\in\State(H)).$$ 
The partial order $\leq$ is usually called \emph{Loewner order} or positive semidefinite order in the literature. Due to its importance in quantum mechanics the Loewner order has been studied extensively, and many of its properties are quite well understood. In particular, the Loewner order automorphism group of $\Bsa(H)$ is known and is described by the following well-known theorem.

\begin{theorem}[Moln\'ar, \cite{Molnar-order1}]\label{thm:Molnar}
    Assume that $\phi\colon\Bsa(H)\to\Bsa(H)$ is a bijective map that preserves the Loewner order in both directions, that is,
    $$ A\leq B \;\;\iff\;\; \phi(A)\leq\phi(B) \quad (A,B\in\Bsa(H)).$$
    Then there exists a bounded bijective linear or conjugate-linear operator $T\colon H \to H$ and $S\in\Bsa(H)$ such that
    $$
    \phi(A) = TAT^* + S \quad (A\in\Bsa(H)).
    $$
\end{theorem}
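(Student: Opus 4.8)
The plan is to begin by stripping off the additive term. Since the Loewner order is translation invariant (i.e. $A\leq B \iff A+C\leq B+C$ for all $C\in\Bsa(H)$), the map $\psi$ defined by $\psi(A):=\phi(A)-\phi(0)$ is again a bijection preserving $\leq$ in both directions and now satisfies $\psi(0)=0$. It therefore suffices to prove that $\psi(A)=TAT^*$ for a suitable $T$, since then $\phi(A)=TAT^*+S$ with $S=\phi(0)$. From $\psi(0)=0$ and two-way order preservation, $A\geq 0 \iff \psi(A)\geq 0$, so $\psi$ restricts to a bijection of the positive cone $\Bsa(H)^{+}$ onto itself.

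The central and most delicate step is to show that $\psi$ is real-linear. Order isomorphisms carry each order interval $[A,B]:=\{C:A\leq C\leq B\}$ bijectively onto $[\psi(A),\psi(B)]$, so the whole problem reduces to recovering the affine segment $\{(1-t)A+tB:t\in[0,1]\}$, and in particular the midpoint $\tfrac12(A+B)$, purely from the relation $\leq$. The hard part will be exactly this: unlike the scalar case, the Loewner order on $\Bsa(H)$ is \emph{not} a lattice once $\dim H\geq 2$, so the usual lattice-theoretic recovery of affine combinations is unavailable. I would instead exploit the geometry of the cone, characterising the midpoint of $[A,B]$ as the distinguished element $M$ for which the intervals $[A,M]$ and $[M,B]$ are order-isomorphic via a translation (equivalently, using the face/extreme-ray structure of $[A,B]$), thereby forcing $\psi\big(\tfrac12(A+B)\big)=\tfrac12\big(\psi(A)+\psi(B)\big)$. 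Combined with $\psi(0)=0$ and a monotonicity argument over dyadic coefficients, this yields additivity and real-homogeneity, so $\psi$ is a real-linear order-isomorphism of $\Bsa(H)$ whose inverse is also positive.

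Granting linearity, I would pass to the rigid part of the structure. A positive linear bijection with positive inverse preserves the extreme rays of $\Bsa(H)^{+}$, which are precisely the rays $\{\lambda\px:\lambda>0\}$ spanned by rank-one projections; hence $\psi$ induces a bijection of the rank-one projections. Moreover $\psi$ preserves order units, and the order units of $\Bsa(H)$ are exactly the positive invertible operators, so $\psi(I)$ is positive and invertible. Renormalising, the map $\chi(A):=\psi(I)^{-1/2}\,\psi(A)\,\psi(I)^{-1/2}$ is a \emph{unital} linear order-isomorphism in both directions; it therefore fixes $I$, maps the extreme points of $[0,I]$ (the projections) bijectively onto themselves, and preserves their orthogonality and lattice structure. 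By the structure theory of such maps — a unital, projection-preserving linear bijection of $\Bsa(H)$ is a Jordan $*$-isomorphism of $B(H)$, obtained from the fundamental theorem of projective geometry applied to the projection lattice — $\chi$ has either the form $\chi(A)=UAU^*$ or $\chi(A)=UA^{\mathsf{t}}U^*$, where $U$ is unitary, the transpose being taken with respect to a fixed orthonormal basis.

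Finally I would unwind the normalisations. Writing $V:=\psi(I)^{1/2}$ (self-adjoint, positive, invertible), we have $\psi(A)=V\,\chi(A)\,V$. In the first case $\psi(A)=(VU)A(VU)^*=TAT^*$ with $T:=VU$ a bounded bijective \emph{linear} operator. In the second case, realising the transpose as $A^{\mathsf{t}}=JAJ$ with $J$ the conjugation fixing the chosen basis, we get $\psi(A)=(VUJ)A(VUJ)^*=TAT^*$ with $T:=VUJ$ a bounded bijective \emph{conjugate-linear} operator. Restoring the translation yields $\phi(A)=TAT^*+S$ with $S=\phi(0)$, which is the desired conclusion; I expect the affinity step of the second paragraph to be the genuine obstacle, everything after it being a matter of applying known structure theorems.
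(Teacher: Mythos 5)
First, for orientation: this paper does not prove Theorem~\ref{thm:Molnar} at all --- it is quoted from \cite{Molnar-order1} as background --- so your proposal can only be judged against the literature, not against an argument in this paper. The genuine gap is exactly where you predicted it, and it is not a technicality. Your proposed recovery of midpoints is circular: the property ``$[A,M]$ and $[M,B]$ are order-isomorphic \emph{via a translation}'' is a true characterisation of the midpoint, but it is not a property of the partial order alone, because it quantifies over translations, and a not-yet-known-to-be-affine order automorphism $\psi$ has no reason to conjugate translations to translations --- that it does is essentially the theorem you are trying to prove. What $\psi$ genuinely preserves is the weaker property ``$[A,M]$ and $[M,B]$ are isomorphic as posets'', and this determines nothing: whenever $B-A$ is positive invertible, the congruence $X \mapsto (B-A)^{-1/2}(X-A)(B-A)^{-1/2}$ is an order isomorphism of $[A,B]$ onto $[0,I]$, so all such intervals --- in particular the two halves $[A,M]$ and $[M,B]$ for \emph{every} suitably interior $M$ --- are mutually order-isomorphic. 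The one-dimensional case makes the failure concrete: on $\Bsa(\C)\cong\R$ the map $\psi(a)=a^3$ preserves $\leq$ in both directions; the midpoint $1$ of $[0,2]$ satisfies your translation criterion, yet $\psi(1)=1$ is not the midpoint of $[\psi(0),\psi(2)]=[0,8]$. (The same example shows the theorem itself is false when $\dim H=1$, so any correct proof of the affinity step must use $\dim H\geq 2$ essentially --- and nothing in your second paragraph does.) The fallback ``face/extreme-ray structure of $[A,B]$'' suffers from the same defect: faces and extreme points are convex-geometric notions, and the convex structure is precisely what is not yet available.

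Everything after the affinity step is essentially sound and standard --- extreme rays of the cone are the rank-one positives, order units are the positive invertibles, the renormalised unital map preserves projections and their orthogonality, and Dye/Uhlhorn-type results give the Jordan form --- with the caveat that the fundamental theorem of projective geometry needs $\dim H\geq 3$, so $\dim H=2$ requires a separate argument. But this means your proposal proves the theorem only modulo its hardest part. The known proofs do not establish linearity first; they extract rigid structure directly from the order, for instance via the characterisation that a positive operator $A$ has rank at most one if and only if $\{X\colon 0\leq X\leq A\}$ is totally ordered. This hands you a bijection of projective space, Wigner/Uhlhorn-type theorems then produce the implementing operator $T$, and only at the very end does one check that the automorphism normalised by $T$ is the identity. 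Notice that this is precisely the strategy the present paper follows for the \emph{variance} order: in STEPs 1--2 of the proof of Theorem~\ref{thm:pres}, observables with two-point spectrum are singled out by total orderedness of $A^{\succeq}$, and Theorem~\ref{thm:ProjComm} plays the role of the projective-geometry input. To salvage your outline, either replace the midpoint step by such a rank-one argument, or invoke a bona fide affinity theorem for order isomorphisms (e.g.\ in the framework of \v{S}emrl's work on order isomorphisms of operator intervals \cite{Se-order1}), where affinity is a hard theorem rather than a short lemma.
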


We note that there has been a lot of interest in extending this theorem for positive operators and the effect algebra, and in generalising it to more abstract settings; see \cite{AS-order-abstract,GeSeCMP,IRS-order,Molnar-book,Molnar-order2,Mori-order1,Se-comp,Se-symm,Se-symm2,Se-order1,Se-order2,Se-order3,W-order} for more details.

\section{Statements of our main results}
The purpose of this paper is to propose and examine a similar order relation that is defined in terms of the variance (instead of the expectation value), and to prove Theorems \ref{thm:Lip1} and \ref{thm:pres}.

\subsection{$1$-Lipschitz functions and the variance order}
For two observables $A,B\in\Bsa(H)$ we write $A\preceq B$ if and only if the variance of $A$ is less than or equal to that of $B$ with respect to all pure states $\px$; that is,
\begin{equation}\label{eq:var-ord}
    \Delta_x(A) \leq \Delta_x(B) \quad (\px\in\Proj_1(H)).
\end{equation}
We propose to call $\preceq$ the \emph{variance order}.
As a consequence of our first main result (see Theorem \ref{thm:Lip1} below) we shall obtain that, as in the case of the expectation value, \eqref{eq:var-ord} is equivalent to
\begin{equation}\label{eq:var-ord2}
    \Delta_\rho(A) \leq \Delta_\rho(B) \quad (\rho\in\State(H)).
\end{equation}
%
%
Note that the variance order is reflexive and transitive. However, a short calculation gives that we have $A\preceq \epsilon A + cI$ and $A\succeq \epsilon A + cI$ for all $\epsilon\in\{1,-1\}$ and $c\in \R$, where $I$ denotes the identity operator, and therefore $\preceq$ on $\Bsa(H)$ is not antisymmetric. We shall fix this shortly by defining an equivalence relation on $\Bsa(H)$.

It is natural to ask whether the order relation $A\preceq B$ forces
$A$ and $B$ to be related in any other way? Our first goal in this paper
is to answer this question precisely. Surprisingly, it turns out that this relation implies a rather strong connection between the two observables, namely, that $A$ is a function of $B$. Recall that a function $f\colon K\to\R$ defined on a closed set $K\subset\R$ is \emph{Lipschitz} with Lipschitz constant $c>0$, or \emph{$c$-Lipschitz} for short, if we have 
$$
|f(x)-f(y)| \leq c|x-y| \quad (x,y\in K).
$$
Let us denote the set of all such functions by $\Lip_c(K)$.
It is well-known that such functions can be extended to be Lipschitz on the whole real line with the same constant $c$, see for instance Banach's book \cite{Lip-ext}. Now, we are in the position to state our first main result.

\begin{theorem}\label{thm:Lip1}
Assume that $H$ is a complex Hilbert space and $A, B\in\Bsa(H)$ are
bounded observables. Then the following conditions are equivalent:
\begin{enumerate}[label=(\roman*)]
    \item\label{thm:Lip1:pure} $\Delta_x(A) \leq \Delta_x(B)$ holds for all pure states $\px\in\Proj_1(H)$;
    \item\label{thm:Lip1:state} $\Delta_\rho(A) \leq \Delta_\rho(B)$ holds for all states $\rho\in\State(H)$;
    \item\label{thm:Lip1:function} there exists a Lipschitz function $f\colon \sigma(B)\to\R$ with Lipschitz constant $1$ such that $A = f(B)$.
\end{enumerate}
\end{theorem}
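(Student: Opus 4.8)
The plan is to establish the cycle \ref{thm:Lip1:function}$\Rightarrow$\ref{thm:Lip1:state}$\Rightarrow$\ref{thm:Lip1:pure}$\Rightarrow$\ref{thm:Lip1:function}, where the first two implications are routine and all the content lies in the last one. The implication \ref{thm:Lip1:state}$\Rightarrow$\ref{thm:Lip1:pure} is immediate since $\Proj_1(H)\subseteq\State(H)$. For \ref{thm:Lip1:function}$\Rightarrow$\ref{thm:Lip1:state}, if $A=f(B)$ with $f\in\Lip_1(\sigma(B))$, then for every $\rho\in\State(H)$ the spectral calculus gives $E_A(\omega)=E_B(f^{-1}(\omega))$, so $\mu_{A,\rho}$ is the pushforward of $\mu_{B,\rho}$ along $f$; the elementary fact that a $1$-Lipschitz map does not increase variance---visible from the representation $\Delta_\rho(B)=\tfrac12\iint(s-t)^2\dm\mu_{B,\rho}(s)\dm\mu_{B,\rho}(t)$ together with $|f(s)-f(t)|\le|s-t|$---then yields $\Delta_\rho(A)\le\Delta_\rho(B)$.

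The heart of the matter is \ref{thm:Lip1:pure}$\Rightarrow$\ref{thm:Lip1:function}, which I would split into first showing $A$ is \emph{some} Borel function of $B$, and then upgrading that function to a $1$-Lipschitz one. For the first stage the basic local estimate is this: for a unit vector $u\in\ran E_B(\omega)$ the spectrum of the compression of $B$ to $\ran E_B(\omega)$ lies in $\overline{\omega}$, so $\Delta_u(B)\le(\diam\omega/2)^2$, and hence by \ref{thm:Lip1:pure} also $\Delta_u(A)\le(\diam\omega/2)^2$. This has two consequences. First (leakage), since $(I-E_B(\omega))u=0$ we get $\|(I-E_B(\omega))Au\|\le\sqrt{\Delta_u(A)}\le\diam\omega/2$. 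Second (near-scalarity), the compression of $A$ to $\ran E_B(\omega)$ has variance at most $(\diam\omega/2)^2$ in every state, so by the standard fact that the maximal variance of a self-adjoint operator equals the square of half its spectral diameter, there is $c_\omega\in\R$ with $\|(A-c_\omega)\!\restriction_{\ran E_B(\omega)}\|\le\diam\omega/2$.

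Next I would prove that the off-diagonal blocks vanish, i.e. $E_B(\tau)AE_B(\omega)=0$ for disjoint Borel sets $\omega,\tau$. Here is where I expect the main obstacle: a uniform operator-norm bound on a single block is only $O(\diam\omega)$, and crudely summing the $\sim1/\delta$ blocks arising from a partition into pieces of size $\delta$ does not close. The resolution is an \emph{asymmetric} telescoping---partition only the source set $\omega=\bigsqcup_{j=1}^N\omega_j$ with $\diam\omega_j\le\delta$ (so $N\approx\diam\omega/\delta$), keep the target $\tau$ whole, and estimate the single Hilbert-space vector $E_B(\tau)Au$ rather than a sum of block norms. For unit $u=\sum_j u_j\in\ran E_B(\omega)$, the leakage bound together with $\ran E_B(\tau)\perp\ran E_B(\omega_j)$ gives $\|E_B(\tau)Au_j\|\le(\delta/2)\|u_j\|$, whence by Cauchy--Schwarz $\|E_B(\tau)Au\|\le(\delta/2)\sum_j\|u_j\|\le(\delta/2)\sqrt{N}\approx\tfrac12\sqrt{\delta\,\diam\omega}\to0$. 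As the left-hand side is independent of $\delta$, it vanishes; thus $A$ leaves every spectral subspace of $B$ invariant. Combining this exact invariance with near-scalarity, the step operators $A_\delta=\sum_j c_{\omega_j}E_B(\omega_j)$ lie in the von Neumann algebra $W^*(B)$ and satisfy $\|A-A_\delta\|\le\delta/2$; letting $\delta\to0$ and using that $W^*(B)$ is norm closed gives $A\in W^*(B)$, i.e. $A=f(B)$ for a bounded Borel function $f\colon\sigma(B)\to\R$.

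Finally I would show $f$ can be chosen $1$-Lipschitz. Representing $W^*(B)\cong L^\infty(\sigma(B),\mu)$ for a maximal scalar spectral measure $\mu$, with $B\leftrightarrow(s\mapsto s)$ and $A\leftrightarrow f$, fix $s\ne t$ in $\sigma(B)=\mathrm{supp}\,\mu$ and small $\eps>0$. The bands $(s-\eps,s+\eps)$ and $(t-\eps,t+\eps)$ have positive $\mu$-measure, so one can pick orthonormal $u,v$ whose distributions $\mu_{B,u},\mu_{B,v}$ are the normalized restrictions of $\mu$ to these bands. Since $A$ and $B$ both preserve the two bands, all cross terms vanish, and testing \ref{thm:Lip1:pure} on $x=(u+v)/\sqrt2$ reduces to a two-point variance computation giving
\[
\tfrac14\big(\overline{f}_s^{\,\eps}-\overline{f}_t^{\,\eps}\big)^2\le\tfrac14\big(\overline{s}^{\,\eps}-\overline{t}^{\,\eps}\big)^2+\tfrac12\big(\sigma_s^2+\sigma_t^2\big),
\]
where $\overline{f}_s^{\,\eps}=\int f\dm\mu_{B,u}$, $\overline{s}^{\,\eps}=\int s'\dm\mu_{B,u}(s')$, and $\sigma_s^2\le\eps^2$ is the within-band variance of $B$, and similarly for $t$. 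As $|\overline{s}^{\,\eps}-s|\le\eps$ and $\sigma_s^2\le\eps^2$, letting $\eps\to0$ and invoking the Lebesgue--Besicovitch differentiation theorem ($\overline{f}_s^{\,\eps}\to f(s)$ for $\mu$-a.e.\ $s$) yields $|f(s)-f(t)|\le|s-t|$ for $\mu$-a.e.\ $s,t$. A function that is $1$-Lipschitz on a $\mu$-conull (hence dense) subset of the compact set $\sigma(B)$ extends uniquely to a genuine element of $\Lip_1(\sigma(B))$ agreeing with $f$ $\mu$-a.e.; renaming it $f$, we obtain $A=f(B)$ with $f\in\Lip_1(\sigma(B))$, which closes the cycle and in passing proves the equivalence of \ref{thm:Lip1:pure} and \ref{thm:Lip1:state}.
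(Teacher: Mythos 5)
Your proof of \ref{thm:Lip1:function}$\Rightarrow$\ref{thm:Lip1:state} (push-forward measure plus the double-integral variance identity) coincides with the paper's, but for the core implication \ref{thm:Lip1:pure}$\Rightarrow$\ref{thm:Lip1:function} you take a genuinely different route. The paper never localises along spectral bands of $B$: it develops a calculus of approximate eigenvectors (Lemmas \ref{lem:appox1} and \ref{lem:approx2}), defines $f(\lambda):=\lim_{\eps\to 0}\bbE_{x_\eps}(A)$ for \emph{every} $\lambda\in\sigma(B)$ via an approximate eigenvector $(x_\eps)$ of $B$ (Lemma \ref{lem:lip}), obtains the Lipschitz bound by superposing two approximate eigenvectors, and only at the very end identifies $A=f(B)$ through a cyclic decomposition $H=\bigoplus_\kappa H_\kappa$ and the Lebesgue--Besicovitch differentiation theorem (Lemma \ref{lem:LDT-vectors}). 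You reverse the order: first you show $A$ is \emph{some} Borel function of $B$ --- via the leakage and near-scalarity estimates and the asymmetric telescoping bound $\|E_B(\tau)Au\|\le\tfrac12\sqrt{\delta\,\diam\omega}$, which correctly forces the off-diagonal blocks to vanish, followed by norm-approximation by the step operators $A_\delta$ --- and only afterwards upgrade $f$ to a $1$-Lipschitz representative by a two-point variance test plus differentiation. I checked the estimates in your first three stages and they are correct for an arbitrary Hilbert space (with the understanding that the near-scalarity bound is a statement about the compression $E_B(\omega)AE_B(\omega)$, which is all you use). This part is a clean, self-contained alternative to the paper's approximate-eigenvector machinery; what the paper's route buys instead is that $f$ is defined globally by a measure-free formula, which is precisely what makes the general (non-separable) case painless.

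The genuine gap is in your last stage and concerns generality: the theorem assumes only that $H$ is a complex Hilbert space, but a maximal scalar spectral measure $\mu$ for $B$, with $W^*(B)\cong L^\infty(\sigma(B),\mu)$, need not exist when $H$ is non-separable. For instance, if $H=\ell^2([0,1])$ and $Be_t=te_t$, then every vector measure $\mu_{B,\psi}$ is atomic with countably many atoms, while $E_B(\omega)\neq 0$ for every non-empty $\omega\subseteq[0,1]$, so no finite scalar measure shares the null sets of $E_B$. (Relatedly, your gloss ``$A\in W^*(B)$, i.e.\ $A=f(B)$ for a bounded Borel $f$'' invokes von Neumann's theorem, which also fails non-separably: in the same example $\{B\}''$ is the algebra of \emph{all} bounded diagonal operators, Borel symbol or not. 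For you this is cosmetic, since your $A_\delta$ are already $g_\delta(B)$ for Borel step functions $g_\delta$ and $L^\infty(E_B)$ is complete in the essential supremum norm, so the norm limit is directly a Borel function of $B$; you should phrase it that way.) Two repairs are available. One is the paper's device: fix a cyclic decomposition with cyclic vectors $\psi_\kappa$ and measures $\mu_\kappa=\mu_{B,\psi_\kappa}$, build the band vectors from $E_B(I_s)\psi_\kappa$, and obtain one conull set $S_\kappa$ of differentiation points per $\kappa$; the cross terms still vanish when the two bands sit in different cyclic summands, so your two-point inequality holds for all $s\in S_\kappa$, $t\in S_{\kappa'}$, and $S=\bigcup_\kappa S_\kappa$ suffices because a set that is $\mu_\kappa$-null for every $\kappa$ is $E_B$-null. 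The other repair avoids differentiation altogether: applied to arbitrary unit vectors in the two bands, your inequality yields $\mathrm{ess\,sup}_{I_s}f-\mathrm{ess\,inf}_{I_t}f\le|s-t|+O(\eps)$ (essential bounds with respect to $E_B$); taking $t=s$ shows the essential oscillation of $f$ on $I_s$ is $O(\eps)$, so $\tilde f(s):=\lim_{\eps\to0}\mathrm{ess\,sup}_{I_s}f$ is defined at every point of $\sigma(B)$, is $1$-Lipschitz, and agrees with $f$ up to an $E_B$-null set. With either repair your argument is complete; restricted to separable $H$ it is correct as written.
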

The implication \ref{thm:Lip1:state}$\implies$\ref{thm:Lip1:pure} is trivial
and so we will prove \ref{thm:Lip1:function}$\implies$\ref{thm:Lip1:state} and
\ref{thm:Lip1:pure}$\implies$\ref{thm:Lip1:function}; we do this in Section
\ref{sec:Lip1}.

Next, we say that two observables $A,B\in\Bsa(H)$ are in the same \emph{variance-equivalence class}, if we have $A\preceq B$ and $A\succeq B$. This indeed defines an equivalence relation. Let $\varclass{A} := \{B\in\Bsa(H)\colon A\preceq B, A\succeq B\}$ stand for the variance-equivalence class of $A$, and let us use the notation $\varclass{A}\preceq\varclass{B}$ if and only if $A\preceq B$. On the set of all variance-equivalence classes $\preceq$ clearly defines a partial order, as it is now also antisymmetric. The following is a straightforward consequence of Theorem \ref{thm:Lip1}.

\begin{corollary}
For all observables $A\in\Bsa(H)$ we have 
$$\varclass{A} = \{A+cI, -A+cI \colon c\in\R\}.$$
\end{corollary}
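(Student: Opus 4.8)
The plan is to prove the two inclusions separately, leaning on both directions of Theorem~\ref{thm:Lip1} for the nontrivial one. The inclusion $\{A+cI,-A+cI\colon c\in\R\}\subseteq\varclass{A}$ is immediate from the elementary identity $\Delta_x(\pm A+cI)=\Delta_x(A)$, valid for every pure state $\px$ and every $c\in\R$, which was already recorded above: this equality gives at once $A\preceq \pm A+cI$ and $A\succeq \pm A+cI$, which is exactly what is required for $\pm A+cI$ to lie in $\varclass{A}$.

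For the reverse inclusion I would take $B\in\varclass{A}$, so that $A\preceq B$ and $B\preceq A$ hold simultaneously, and apply the equivalence \ref{thm:Lip1:pure}$\iff$\ref{thm:Lip1:function} of Theorem~\ref{thm:Lip1} to each relation. This produces $1$-Lipschitz functions $f\colon\sigma(B)\to\R$ and $g\colon\sigma(A)\to\R$ with $A=f(B)$ and $B=g(A)$. Both functions are continuous, so the spectral mapping theorem gives $\sigma(B)=g(\sigma(A))$, and the composition rule for the continuous functional calculus yields $(f\circ g)(A)=f(g(A))=f(B)=A$. Since $h\mapsto h(A)$ is injective on $C(\sigma(A))$, this forces $(f\circ g)(t)=t$ for every $t\in\sigma(A)$.

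The crux is then to convert this into rigidity. For any $s,t\in\sigma(A)$ we have
\[
|s-t|=\bigl|(f\circ g)(s)-(f\circ g)(t)\bigr|\le|g(s)-g(t)|\le|s-t|,
\]
so equality holds throughout and $g$ is an \emph{isometry} of $\sigma(A)$ into $\R$. It remains to show that such an isometry is necessarily the restriction of $t\mapsto t+c$ or of $t\mapsto -t+c$; then $B=g(A)=\eps A+cI$ for some $\eps\in\{1,-1\}$, completing the proof. To do this I would fix $s_0\in\sigma(A)$ and write $g(s)-g(s_0)=\eps_s\,(s-s_0)$ with $\eps_s\in\{1,-1\}$ for $s\neq s_0$; a short case analysis of $|g(s)-g(t)|=|s-t|$ shows that all the signs $\eps_s$ must coincide, so $g(t)=\eps t+c$ on $\sigma(A)$ for a single $\eps$. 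The main point to be careful about is precisely this last sign-rigidity, since it must be established for an arbitrary (possibly disconnected or sparse) spectrum rather than for an interval; the only other caveat is the degenerate case $\sigma(A)=\{\lambda\}$, i.e.\ $A=\lambda I$, where every function on $\sigma(A)$ is trivially an isometry, but there both sides of the claimed identity collapse to $\{cI\colon c\in\R\}$ and the statement holds directly.
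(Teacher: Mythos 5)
Your proof is correct and follows exactly the route the paper intends: the paper states this corollary as a straightforward consequence of Theorem~\ref{thm:Lip1} without written details, and your argument (trivial inclusion via $\Delta_x(\pm A+cI)=\Delta_x(A)$, then composing the two $1$-Lipschitz functions from both directions of the theorem to get an isometry of $\sigma(A)$, which must be $t\mapsto\eps t+c$) is precisely that straightforward consequence, with the sign-rigidity step verifiable by the short computation you indicate, namely that $\eps_s\neq\eps_t$ forces $(s-s_0)(t-s_0)=0$.
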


\subsection{Rigidity of variance order automorphisms}

The second main purpose of this paper is to characterise all variance order automorphisms of $\Bsa(H)$. Recall that every Loewner order automorphism is a composition of a conjugation by a bijective linear or conjugate-linear operator $T$ and a translation by an observable $S$ (see Theorem \ref{thm:Molnar}). 
Let us now give a few examples of variance order automorphisms. 
\begin{enumerate}[label=(\arabic*)]
    \item Every bijective map $\phi\colon\Bsa(H)\to\Bsa(H)$ that leaves each variance-equivalence class invariant (that is, $\phi(\varclass{A}) = \varclass{A}$ holds for all $A$) preserves the variances with respect to all states. Therefore these maps are clearly variance order automorphisms.
    \item For any positive number $\alpha$ the map $A\mapsto \alpha A$ is obviously a variance order automorphism, as $\Delta_\rho(\alpha A) = \alpha^2\Delta_\rho(A)$ holds for all $A$ and $\rho$.
    \item For each unitary or antiunitary operator $U\colon H\to H$, the map $A\mapsto UAU^*$ satisfies $\Delta_\rho(UAU^*) = \Delta_{U^*\rho U}(A)$ for all $A$ and $\rho$. Therefore these maps are also variance order automorphisms.
\end{enumerate}

%
It turns out that every variance order automorphism is a composition of these types of maps; see Theorem \ref{thm:pres} below. This is rather surprising, since in a sense there is more rigidity than for Loewner order automoprhisms. Of course, one should keep in mind that as the first example illustrates, a general variance order automorphism is not continuous, unlike in the case of the Loewner order (Theorem \ref{thm:Molnar}). On the other hand, if one concentrates only on the variance-equivalence classes, then a general variance order automorphism is simply a conjugation with a very special type of linear or conjugate-linear operator. Indeed note that unitary and antiunitary operators are exactly the bijective linear and conjugate-linear isometry operators respectively. This is why in some sense the variance order automorphisms have a more rigid form than Loewner order automorphisms. We state the related result below.


We will say that a topology $\mathcal{T}$ on $\Bsa(H)$ is a
\emph{unitary-antiunitarily invariant vector topology} if
$(\Bsa(H),\mathcal{T})$ is a (real and Hausdorff) topological vector space
such that the map $A\mapsto U^*AU$ is a homeomorphism of
$(\Bsa(H),\mathcal{T})$ for every unitary and antiunitary operator $U\colon H\to H$.

\begin{theorem}\label{thm:pres}
Suppose that $H$ is a separable complex Hilbert space of dimension at least $3$. Let $\Phi\colon \Bsa(H) \to \Bsa(H)$ be a bijective map that satisfies
\begin{equation}\label{eq:iff-preceq}
    B \preceq A \;\;\iff\;\; \Phi(B) \preceq \Phi(A) \quad (A,B\in \Bsa(H)).
\end{equation}
In case when $\dim H = \aleph_0$, we further assume that $\Phi$ is continuous
with respect to a unitary-antiunitarily invariant vector topology $\mathcal{T}$
in which the set $\{A\in\Bsa(H)\colon\#\sigma(A)<\infty\}$ is dense.
Then there exist an either unitary or antiunitary operator $U\colon H\to H$ and a positive number $\alpha$ such that
\begin{equation}\label{eq:pres-form}
    \Phi\left(\varclass{A}\right) = \varclass{\alpha UAU^*} \qquad (A\in\Bsa(H))
\end{equation}
is satisfied.
\end{theorem}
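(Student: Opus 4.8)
The plan is to reduce the problem to understanding how $\Phi$ acts on variance-equivalence classes, and then to exploit the rich order-theoretic structure of those classes. By Theorem~\ref{thm:Lip1} and its Corollary, two observables lie in the same class exactly when $B = \pm A + cI$, so the quotient set $\mathcal{Q} := \Bsa(H)/\!\!\sim$ carries the partial order induced by $\preceq$, and \eqref{eq:iff-preceq} means $\Phi$ descends to an order-automorphism $\overline{\Phi}$ of $(\mathcal{Q},\preceq)$. The first step is therefore to translate the three example maps into their action on $\mathcal{Q}$, observe that scalings $A\mapsto\alpha A$ and unitary/antiunitary conjugations are precisely the ``rigid'' moves that survive on the quotient, and set as the goal to prove that $\overline{\Phi}$ is of the form $\varclass{A}\mapsto\varclass{\alpha UAU^*}$. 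Conversely, once $\overline{\Phi}$ is pinned down, \eqref{eq:pres-form} follows immediately.

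Next I would identify order-theoretic invariants that $\overline{\Phi}$ must preserve and that encode the geometry of the spectrum. The natural objects are the classes of the simplest observables: rank-one type projections and, more generally, observables with two-point spectrum. A projection $P$ satisfies $\Delta_x(P)\le\tfrac14$ for all $x$ with equality attained, so by Theorem~\ref{thm:Lip1}\ref{thm:Lip1:function} the $1$-Lipschitz condition translates $\varclass{P}\preceq\varclass{Q}$ into a comparison of spectral ``spreads''. I expect the key combinatorial reductions to come from the lattice-like structure: the classes of multiples of projections $\{\alpha\cdot\text{(two-point spectrum)}\}$ should form a recognisable sub-poset, and minimality/atomicity properties in $(\mathcal{Q},\preceq)$ should single out the projection classes up to the allowed transformations. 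Concretely, I would aim to show that $\overline{\Phi}$ maps projection classes to projection classes (up to a global scale $\alpha$), then recover the orthomodular/orthogonality structure of $\Proj(H)$ from the order $\preceq$ restricted to these classes, so that a Wigner-type or Uhlhorn-type symmetry theorem forces the existence of a unitary or antiunitary $U$. The scalar $\alpha$ emerges because the whole construction is defined only up to the positive dilation in example~(2).

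The role of the dimension hypothesis ($\dim H\ge 3$) and the separability/continuity hypothesis in the infinite-dimensional case is to make these recovery steps work: dimension at least $3$ is exactly what is needed for the Uhlhorn/Wigner machinery on $\Proj_1(H)$, and in the $\aleph_0$ case the finite-spectrum observables are dense, so continuity of $\Phi$ with respect to $\mathcal{T}$ lets me extend the action from the dense set of finite-spectrum (hence ``finite-dimensional-like'') observables to all of $\Bsa(H)$. I would first prove the statement on the dense subset $\{A\colon\#\sigma(A)<\infty\}$ by the combinatorial/symmetry argument, establish that the resulting $U$ and $\alpha$ are uniform across the subset, and then pass to the closure using that $A\mapsto U^*AU$ is a $\mathcal{T}$-homeomorphism and that scaling is $\mathcal{T}$-continuous.

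The main obstacle I anticipate is the reconstruction step: recovering the projection lattice and its orthogonality relation purely from the variance order $\preceq$ on equivalence classes. Unlike the Loewner order, where $A\le B$ directly compares expectation values and linearity is almost immediate, here $\preceq$ only sees variances, which are invariant under $A\mapsto -A+cI$ and blind to affine shifts; this is precisely why antisymmetry fails and why one must work on $\mathcal{Q}$. The delicate point will be to characterise, in purely order-theoretic terms, which classes correspond to (scalar multiples of) projections and how two such classes encode orthogonality or commutation of the underlying projections, so that Uhlhorn's theorem becomes applicable. Getting a clean intrinsic description of these special classes — and proving that $\overline{\Phi}$ cannot mix them with classes of genuinely ``spread-out'' observables — is where the real work lies; once that is in place, the symmetry theorem and the continuity-based extension should close the argument.
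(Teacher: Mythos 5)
Your outline correctly identifies the skeleton the paper uses (recognise two-point-spectrum classes order-theoretically, recover the projection structure, invoke a symmetry theorem, then extend to finite spectrum and pass to the closure), but the step you yourself flag as ``where the real work lies'' is not merely unresolved --- the specific route you propose for it would fail. Orthogonality is not a well-defined relation on variance-equivalence classes: since $\varclass{P}=\varclass{-P+I}=\varclass{P^\perp}$, the pair $(P,P^\perp)$ is orthogonal while the pair $(P,P)$ is not, yet both represent the same pair of classes; so one cannot ``recover the orthogonality relation'' on $\Proj(H)$ from $\preceq$ and feed it into Uhlhorn's theorem. The relation that \emph{is} encoded by the order is commutativity: the paper's Lemma \ref{lem:comm} shows that (for finite spectrum) two observables commute if and only if they admit a common $\preceq$-upper bound, and accordingly the paper applies a commutativity-preserver theorem for $\Proj(H)$, $\dim H\geq 3$ (Theorem \ref{thm:ProjComm}), rather than a Wigner/Uhlhorn theorem. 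This common-upper-bound characterisation is the missing key idea in your plan.

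There are two further gaps. First, you assume projection classes go to projection classes ``up to a global scale $\alpha$'', but what the order argument gives a priori is $\Phi(\varclass{tP})=\varclass{\xi_P(t)F(P)}$ with a separate, possibly nonlinear, increasing bijection $\xi_P$ of $(0,\infty)$ for each projection $P$; proving that $\xi_P$ does not depend on $P$ and that $\xi(t)=\alpha t$ occupies two full steps of the paper's proof and requires three-point-spectrum observables via Lemma \ref{lem:3-spec} --- nothing in your sketch produces linearity, and dilation-invariance of the examples does not imply it. Second, knowing $\Phi$ on (multiples of) projection classes does not determine it on general finite-spectrum classes: the paper needs a double-commutant argument (using separability of $H$) to show $\Phi(B)=Uf_B(B)U^*$ for some bounded Borel function $f_B$, and then Lemma \ref{lem:3-spec} together with the induction-type Lemma \ref{lem:for-ind} (an observable with $n\geq 4$ spectral points is determined up to equivalence by the set of smaller-spectrum observables below it) to force $f_B$ to be trivial. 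Your proposal jumps from the symmetry theorem to ``establish that the resulting $U$ and $\alpha$ are uniform across the subset'' without any mechanism for this extension; only the final density/continuity step matches the paper and is indeed straightforward.
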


Note that the topology induced by the operator norm, and the weak/strong operator topologies
are unitary-antiunitarily invariant vector topologies
on $\Bsa(H)$ that satisfy the density requirement in the statement of the above theorem.
We shall prove this theorem in Section \ref{sec:pres}. Throughout the proof our previous result Theorem \ref{thm:Lip1} will play a crucial role. 

Before we proceed with the proofs, we would like to mention a result of Moln\'ar--Barczy, \cite[Theorem 2]{MoBa}, that characterises all bijective linear maps $\phi\colon\Bsa(H)\to\Bsa(H)$ preserving the maximal deviation (defined by $\|A\|_v := \sup\{\sqrt{\Delta_x(A)}\colon \px\in \Proj_1(H)\}$). Note that even though there are similarities between their and our results, there are some crucial differences too: we do not assume linearity of $\Phi$, and instead of preserving a quantity our map preserves only a relation. However, interestingly our result does not seem to imply the Moln\'ar-Barczy theorem.

\section{Proof of Theorem \ref{thm:Lip1}}\label{sec:Lip1}

\subsection{Proof of \ref{thm:Lip1:function}~$\implies$~\ref{thm:Lip1:state}}

Let $\mu$ be a Borel probability measure on $\R$. We will denote the
variance of $\mu$ by $Var(\mu)$. Then we have the standard identities
\[
Var(\mu) = \int_\R t^2 \dm\mu(t) - \left(\int_\R t \dm\mu(t)\right)^2
=\frac{1}{2}\int_\R\int_\R (t-s)^2 \dm\mu(t)\dm\mu(s).
\label{eq:variance}
\]
Let $S$ be the closed support of $\mu$. Then if $f:S\to\R$ is Borel
measurable, we will write $f_*\mu$ for the \emph{push-forward} of $\mu$
by $f$, i.e. $f_*\mu$ is the Borel measure on $\R$ defined by 
$$
(f_*\mu)(\omega) = \mu(f^{-1}(\omega)) \quad (\omega\in\mathfrak{B}(\R)).
$$
It is well-known that the push-forward measure satisfies
$$
\int_\R g \dm(f_*\mu) = \int_\R g\circ f \dm\mu
$$
for every continuous compactly supported function $g\colon\R\to\R$.

For an observable $A\in\Bsa(H)$ and state $\rho\in\State(H)$,
we will write $E_A$ for its (projection-valued) spectral measure and  $\mu_{A,\rho}$
for the probability measure defined in \eqref{eq:Born-rho}. Note that $\Delta_\rho(A)=Var(\mu_{A,\rho})$. It is well known that for every continuous function $f\colon\sigma(A)\to\R$ we have 
$$
E_{f(A)}(\omega) = E_A(f^{-1}(\omega)) \quad (\omega\in\mathfrak{B}(\R)),
$$
and therefore 
\[
\mu_{f(A),\rho}
= f_*\mu_{A,\rho}.
\label{eq:var-f(A)}
\]
The proof of the desired implication is now immediate.

\begin{proof}[Proof of Theorem \ref{thm:Lip1}
\ref{thm:Lip1:function} $\implies$ \ref{thm:Lip1:state}.]
Let $A,B\in\Bsa(H)$ be observables with $A=f(B)$ for some
$f\in\Lip_1(\sigma(B))$. Then using \eqref{eq:variance} and \eqref{eq:var-f(A)}
we see that for each state $\rho\in\State(H)$ we have
\begin{multline*}
\Delta_\rho(A) = \Delta_\rho(f(B))
=Var(f_*\mu_{B,\rho}) \\
= \frac{1}{2}\int_{\sigma(B)}\int_{\sigma(B)} (f(t)-f(s))^2
\dm\mu_{B,\rho}(t)\dm\mu_{B,\rho}(s).
\end{multline*}
Since $|f(t)-f(s)|\leq|t-s|$, we conclude that
$$
\Delta_\rho(A)
\leq
\frac{1}{2}\int_{\sigma(B)}\int_{\sigma(B)} (t-s)^2\dm\mu_{B,\rho}(t)\dm\mu_{B,\rho}(s)
= Var(\mu_{B,\rho})
= \Delta_{\rho}(B).
$$
\end{proof}

\subsection{Proof of \ref{thm:Lip1:pure}~$\implies$~\ref{thm:Lip1:function}
when $\dim H<\infty$}

In order to elucidate some of the main ideas in proving the implication
\ref{thm:Lip1:pure}~$\implies$~\ref{thm:Lip1:function}, we first give a
short proof in the simpler case where $H$ is finite dimensional. We will
make use of the fact that in this setting, each observable has a basis of
eigenvectors.

An important observation (here, and in the infinite dimensional setting) is
that for an observable $A\in\Bsa(H)$ and a unit vector $x\in H$, $\Delta_x(A)$
is a quantitative measure of how close $x$ is to being an eigenvector of $A$. Indeed,
\begin{align*}
\Delta_x(A) &= \langle A^2x, x\rangle - \langle Ax, x\rangle^2 \\
&= \langle Ax, Ax - \bbE_x(A)x\rangle \\
&= \|Ax - \bbE_x(A)x\|^2 + \bbE_x(A)\langle x, Ax - \bbE_x(A)x \rangle.
\end{align*}
But $\langle x, Ax - \bbE_x(A)x \rangle = \langle x, Ax \rangle - \bbE_x(A)
=0$, and so we have the identity
\[
\Delta_x(A) = \|Ax - \bbE_x(A)x\|^2.
\label{eq:var-eigenvector}
\]
In particular, this shows that $x$ is an eigenvector of $A$ if and only if
$\Delta_x(A)=0$.

\begin{proof}[Proof of Theorem \ref{thm:Lip1} \ref{thm:Lip1:pure} $\implies$ \ref{thm:Lip1:function} when $\dim H<\infty$]
Let $A,B\in\Bsa(H)$ be observables with $A\preceq B$. Then since
$\dim H<\infty$, $\sigma(B)$ is finite and $B$ has a spectral
decomposition
$$
B = \sum_{\lambda\in\sigma(B)} \lambda P_\lambda,
$$
where each $\lambda\in\sigma(B)$ is an eigenvalue of $B$ and $P_\lambda$
is the orthogonal projection onto the corresponding eigenspace.

Let $x\in H$ be a unit eigenvector of $B$. Then since $A\preceq B$ we have
$$
0\leq\Delta_x(A)\leq\Delta_x(B)=0.
$$
Hence $x$ is also an eigenvector of $A$. Moreover, this shows that for each
$\lambda\in\sigma(B)$, every $x\in\ran P_\lambda$ is an eigenvector of $A$,
so we conclude that $\ran P_\lambda$ is an eigenspace of $A$. Set
$f(\lambda)$ to be the eigenvalue of $A$ corresponding to $P_\lambda$.
Then
$$
A = \sum_{\lambda\in\sigma(B)} f(\lambda) P_\lambda,
$$
so that $A=f(B)$.

It remains to show that $f\in\Lip_1(\sigma(B))$. To see
this, we take $\lambda_1,\lambda_2\in\sigma(B)$, $\lambda_1 \neq\lambda_2$,
and unit eigenvectors $x_1, x_2$ of $B$, with eigenvalues $\lambda_1,
\lambda_2$ respectively. Set $y=(x_1+x_2)/\sqrt{2}$ so that $\|y\|=1$.
Then for any $f:\sigma(B)\to\R$, a simple computation gives that $\bbE_y(f(B))=(f(\lambda_1)+f(\lambda_2))/2$ and
$$
\Delta_y(f(B)) = \|f(B)y - \bbE_y(f(B))y\|^2
 = \frac{1}{8}(f(\lambda_1)-f(\lambda_2))^2\|x_1-x_2\|^2.
$$ 
However, since $x_1$ and $x_2$ are orthogonal, the right hand side
is equal to
$$
\frac{1}{4}(f(\lambda_1)-f(\lambda_2))^2.
$$
It follows that if $A=f(B)\preceq B$ we must have that
$$
|f(\lambda_1)-f(\lambda_2)| = 2\sqrt{\Delta_y(f(B))}
\leq 2\sqrt{\Delta_y(B)}
= |\lambda_1-\lambda_2|.
$$
Hence $f\in\Lip_1(\sigma(B))$.
\end{proof}

\subsection{Proof of \ref{thm:Lip1:pure}~$\implies$~\ref{thm:Lip1:function}
for general $H$}

The case when $H$ is infinite dimensional is more subtle because a general
observable need not have a discrete spectrum. To deal with this we will
work with a slight generalization of the notion of an eigenvector: we will
say that a family of unit vectors $(x_\eps)\subseteq H$, defined for all
sufficiently small $\eps>0$, is an \emph{approximate eigenvector} for an
observable $A$ with \emph{approximate eigenvalue} $\lambda\in\R$ if and
only if $\|Ax_\eps-\lambda x_\eps\|\to 0$ as $\eps\to0$. It is standard that
there exists an approximate eigenvector
for $A$ with approximate eigenvalue $\lambda$ if and only if $\lambda
\in\sigma(A)$, so each observable has approximate eigenvectors. Moreover,
it is not difficult to see that if $(x_\eps)$ is an approximate
eigenvector for $A$ with approximate eigenvalue $\lambda$ and
$f:\sigma(A)\to\C$ is continuous, then $(x_\eps)$ is also an
approximate eigenvector for $f(A)$ with approximate eigenvalue $f(\lambda)$.

We start with a simple characterisation of approximate eigenvectors in
terms of the expectation and variance of an observable.

\begin{lemma}\label{lem:appox1}
Let $A\in\Bsa(H)$ be an observable. Then a family of unit vectors
$(x_\eps)\subseteq H$ is an approximate eigenvector for $A$ with
approximate eigenvalue $\lambda\in\sigma(A)$ if and only if
$\Delta_{x_\eps}(A)\to 0$ and $\bbE_{x_\eps}(A)\to\lambda$ as $\eps\to0$.
\end{lemma}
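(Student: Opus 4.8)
The plan is to deduce the entire equivalence from the single identity \eqref{eq:var-eigenvector}, namely $\Delta_x(A)=\|Ax-\bbE_x(A)x\|^2$, supplemented by the elementary Cauchy--Schwarz estimate
\[
|\bbE_x(A)-\lambda| = |\jap{(A-\lambda I)x,x}| \leq \|(A-\lambda I)x\|\,\|x\| = \|Ax-\lambda x\|,
\]
valid for every unit vector $x$ and every $\lambda\in\R$. With these two ingredients in hand, both implications collapse to a pair of triangle-inequality arguments, so I do not expect any real obstacle; the content of the lemma is entirely captured by \eqref{eq:var-eigenvector}, which reinterprets the variance as the squared distance of $Ax$ from the line spanned by $x$.

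For the forward implication, I would assume $(x_\eps)$ is an approximate eigenvector with approximate eigenvalue $\lambda$, i.e. $\|Ax_\eps-\lambda x_\eps\|\to0$. The displayed Cauchy--Schwarz bound immediately gives $\bbE_{x_\eps}(A)\to\lambda$. Then, writing $Ax_\eps-\bbE_{x_\eps}(A)x_\eps = (Ax_\eps-\lambda x_\eps) + (\lambda-\bbE_{x_\eps}(A))x_\eps$ and taking norms,
\[
\sqrt{\Delta_{x_\eps}(A)} = \|Ax_\eps-\bbE_{x_\eps}(A)x_\eps\| \leq \|Ax_\eps-\lambda x_\eps\| + |\lambda-\bbE_{x_\eps}(A)| \longrightarrow 0,
\]
where I have used \eqref{eq:var-eigenvector} for the first equality. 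This yields $\Delta_{x_\eps}(A)\to0$ and completes this direction.

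For the converse, I would assume $\Delta_{x_\eps}(A)\to0$ and $\bbE_{x_\eps}(A)\to\lambda$. By \eqref{eq:var-eigenvector} the first hypothesis is exactly $\|Ax_\eps-\bbE_{x_\eps}(A)x_\eps\|\to0$, and then the same splitting gives
\[
\|Ax_\eps-\lambda x_\eps\| \leq \|Ax_\eps-\bbE_{x_\eps}(A)x_\eps\| + |\bbE_{x_\eps}(A)-\lambda| \longrightarrow 0,
\]
so $(x_\eps)$ is an approximate eigenvector with approximate eigenvalue $\lambda$. Finally, I would remark that the assertion $\lambda\in\sigma(A)$ requires no separate verification: once $\|Ax_\eps-\lambda x_\eps\|\to0$ is established for a family of unit vectors, the standard fact recalled just before the lemma forces $\lambda\in\sigma(A)$ automatically.
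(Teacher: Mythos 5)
Your proof is correct and uses essentially the same ingredients as the paper's: the identity \eqref{eq:var-eigenvector}, the Cauchy--Schwarz bound $|\bbE_{x_\eps}(A)-\lambda|\leq\|Ax_\eps-\lambda x_\eps\|$, and the triangle inequality. The only difference is presentational -- the paper packages these into the single two-sided estimate $\tfrac{1}{2}\|Ax_\eps-\lambda x_\eps\|^2 \leq \Delta_{x_\eps}(A) + |\bbE_{x_\eps}(A)-\lambda|^2 \leq 2\|Ax_\eps-\lambda x_\eps\|^2$, whereas you argue the two implications separately, which yields the same conclusion.
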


\begin{proof}
First observe that for any unit vector $x\in H$, the quantity
$\|Ax - tx\|$, $t\in\R$, is minimized at $t=\bbE_x(A)$. Consequently,
using \eqref{eq:var-eigenvector}, we see that for any $\lambda\in\R$
we have
\begin{align*}
\Delta_{x_\eps}(A) + |\bbE_{x_\eps}(A)-\lambda|^2
&= \|Ax_\eps-\bbE_{x_\eps}(A)x_\eps\|^2
+ |\langle Ax_\eps,x_\eps\rangle - \lambda|^2 \\
&\leq \|Ax_\eps-\lambda x_\eps\|^2 + |\langle Ax_\eps-\lambda x_\eps,x_\eps\rangle|^2 \\
&\leq 2\|Ax_\eps-\lambda x_\eps\|^2.
\end{align*}
In addition, applying the triangle inequality together with the elementary
estimate $(a+b)^2\leq 2(a^2+b^2)$ we see that
\begin{align*}
\|Ax_\eps-\lambda x_\eps\|^2
&\leq 2\left(\|Ax_\eps-\bbE_{x_\eps}(A)x_\eps\|^2 +
\|\bbE_{x_\eps}(A)x_\eps-\lambda x_\eps\|^2\right) \\
&=2\left(\Delta_{x_\eps}(A) + |\bbE_{x_\eps}(A)-\lambda|^2\right).
\end{align*}
Combining these we have
$$
\frac{1}{2}\|Ax_\eps-\lambda x_\eps\|^2
\leq \Delta_{x_\eps}(A) + |\bbE_{x_\eps}(A)-\lambda|^2
\leq 2\|Ax_\eps-\lambda x_\eps\|^2.
$$
Thus $\|Ax_\eps-\lambda x_\eps\|\to 0$ if and only if $\Delta_{x_\eps}(A)\to 0$
and $\bbE_{x_\eps}(A)\to\lambda$ as $\eps\to0$.
\end{proof}

We will also need the following:

\begin{lemma}\label{lem:approx2}
Let $A\in\Bsa(H)$ be an observable and let $(x_\eps),(y_\eps)\subseteq H$
be approximate eigenvectors for $A$ with approximate eigenvalues
$\lambda$ and $\mu$ respectively, $\lambda\neq\mu$. Then for all
sufficiently small $\eps>0$, $x_\eps$ and $y_\eps$ are linearly
independent. Moreover, for any $\alpha,\beta\in\R\setminus\{0\}$ and
$(z_\eps)\subseteq H$ defined by
$$
z_\eps := \frac{\alpha x_\eps + \beta y_\eps}{\|\alpha x_\eps + \beta y_\eps\|},
\quad
\eps>0\;\;\text{sufficiently small},
$$
we have
\[
\Delta_{z_\eps}(A)\to\frac{\alpha^2\beta^2}{(\alpha^2+\beta^2)^2}
(\lambda-\mu)^2 \quad\text{as}\quad \eps\to0.
\label{eq:approx2-result}
\]
\end{lemma}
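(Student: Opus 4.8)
The plan is to show that, although $(x_\eps)$ and $(y_\eps)$ need not be orthogonal, they become \emph{asymptotically orthogonal} as $\eps\to0$, so that in the limit the calculation collapses to the exact orthonormal-eigenvector computation already seen in the finite-dimensional proof. Concretely, writing $w_\eps:=\alpha x_\eps+\beta y_\eps$, I would compute $\Delta_{z_\eps}(A)=\bbE_{z_\eps}(A^2)-\bbE_{z_\eps}(A)^2$ via the three quantities $\|w_\eps\|^2$, $\jap{Aw_\eps,w_\eps}$ and $\|Aw_\eps\|^2=\jap{A^2w_\eps,w_\eps}$ (using self-adjointness for the last equality), each divided by the appropriate power of $\|w_\eps\|$, and then pass to the limit.

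The crucial step is the asymptotic orthogonality $\jap{x_\eps,y_\eps}\to0$. To establish it, set $r_\eps:=Ax_\eps-\lambda x_\eps$ and $s_\eps:=Ay_\eps-\mu y_\eps$, so that $\|r_\eps\|,\|s_\eps\|\to0$ by definition of an approximate eigenvector. Since $A$ is self-adjoint and $\lambda,\mu$ are real, the identity $\jap{Ax_\eps,y_\eps}=\jap{x_\eps,Ay_\eps}$ rearranges to
\[
(\lambda-\mu)\jap{x_\eps,y_\eps}=\jap{x_\eps,s_\eps}-\jap{r_\eps,y_\eps},
\]
whose right-hand side is bounded in modulus by $\|s_\eps\|+\|r_\eps\|\to0$; as $\lambda\neq\mu$ this forces $\jap{x_\eps,y_\eps}\to0$. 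Linear independence of $x_\eps$ and $y_\eps$ for all sufficiently small $\eps$ is then immediate, since two unit vectors are linearly dependent precisely when $|\jap{x_\eps,y_\eps}|=1$; in particular $\|w_\eps\|\neq0$, so $z_\eps$ is well defined.

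With this in hand the limits are routine bookkeeping. From $Aw_\eps=\alpha\lambda x_\eps+\beta\mu y_\eps+(\alpha r_\eps+\beta s_\eps)$, together with the fact that every term involving $\jap{x_\eps,y_\eps}$, $r_\eps$ or $s_\eps$ vanishes in the limit (the surviving factors being bounded), I obtain $\|w_\eps\|^2\to\alpha^2+\beta^2$, $\jap{Aw_\eps,w_\eps}\to\alpha^2\lambda+\beta^2\mu$ and $\|Aw_\eps\|^2\to\alpha^2\lambda^2+\beta^2\mu^2$. Dividing by $\|w_\eps\|^2$ gives $\bbE_{z_\eps}(A)\to(\alpha^2\lambda+\beta^2\mu)/(\alpha^2+\beta^2)$ and $\bbE_{z_\eps}(A^2)\to(\alpha^2\lambda^2+\beta^2\mu^2)/(\alpha^2+\beta^2)$; substituting into $\Delta_{z_\eps}(A)=\bbE_{z_\eps}(A^2)-\bbE_{z_\eps}(A)^2$ and simplifying the numerator to $\alpha^2\beta^2(\lambda-\mu)^2$ yields exactly \eqref{eq:approx2-result}.

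The only genuine obstacle is the asymptotic orthogonality in the second paragraph; once that is in place the remainder is a limit computation with bounded factors, and the closing algebraic identity is the same one (now carrying the weights $\alpha^2,\beta^2$) that produced the value $\tfrac14(\lambda-\mu)^2$ in the finite-dimensional argument.
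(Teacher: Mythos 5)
Your proposal is correct and follows essentially the same route as the paper: first establish $\jap{x_\eps,y_\eps}\to 0$ via self-adjointness of $A$ and $\lambda\neq\mu$ (your identity $(\lambda-\mu)\jap{x_\eps,y_\eps}=\jap{x_\eps,s_\eps}-\jap{r_\eps,y_\eps}$ is just a rearrangement of the paper's estimate), then compute the limits of $\|w_\eps\|^2$, $\jap{Aw_\eps,w_\eps}$ and $\|Aw_\eps\|^2$ and combine them. The only cosmetic difference is that the paper normalises $\alpha^2+\beta^2=1$ at the outset while you carry the denominator through; the algebra is the same.
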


\begin{proof}
First we observe that since
\begin{align*}
|\jap{x_\eps,y_\eps}| &=
\frac{1}{|\lambda-\mu|}|\jap{\lambda x_\eps-Ax_\eps,y_\eps} + \jap{x_\eps,Ay_\eps-\mu y_\eps}| \\
&\leq \frac{1}{|\lambda-\mu|} (\|Ax_\eps-\lambda x_\eps\| + \|Ay_\eps - \mu y_\eps\|),
\end{align*}
we have that $\jap{x_\eps,y_\eps}\to 0$ as $\eps\to 0$, which proves
that $x_\eps$ and $y_\eps$ are linearly independent for all sufficiently
small $\eps>0$.

In proving \eqref{eq:approx2-result}, we assume without loss of
generality that $\alpha^2+\beta^2=1$. Then
$\|\alpha x_\eps + \beta y_\eps\|^2
= 1+2\alpha\beta\Re\jap{x_\eps,y_\eps}\to 1$ as $\eps\to0$.
Clearly we also have that
$$\|A(\alpha x_\eps + \beta y_\eps)\|^2
= \alpha^2\|Ax_\eps\|^2 + \beta^2\|Ay_\eps\|^2 + 2\alpha\beta\Re\jap{Ax_\eps,Ay_\eps}.$$
Observe that
\begin{align*}
|\jap{Ax_\eps,Ay_\eps}| &=
|\jap{Ax_\eps-\lambda x_\eps,Ay_\eps} + \lambda\jap{x_\eps,Ay_\eps-\mu y_\eps} + \lambda\mu\jap{x_\eps,y_\eps}| \\
&\leq \|A\|\|Ax_\eps-\lambda x_\eps\| + |\lambda|\|Ay_\eps - \mu y_\eps\|+ |\lambda\mu\jap{x_\eps,y_\eps}|,
\end{align*}
and so $|\jap{Ax_\eps,Ay_\eps}|\to 0$ as $\eps\to0$. Since we
obviously have $\|Ax_\eps\|\to|\lambda|$ and $\|Ay_\eps\|\to|\mu|$, we
conclude that
\[
\lim_{\eps\to0} \|Az_\eps\|^2
= \lim_{\eps\to0} \|A(\alpha x_\eps + \beta y_\eps)\|^2
=\alpha^2\lambda^2+\beta^2\mu^2.
\label{lem:approx2-norm}
\]

Next we evaluate $\lim_{\eps\to0}\bbE_{z_\eps}(A)$.
Since $\|\alpha x_\eps + \beta y_\eps\|\to 1$ as $\eps\to0$,
$$
\lim_{\eps\to0}\bbE_{z_\eps}(A)
=\lim_{\eps\to0}\jap{A(\alpha x_\eps + \beta y_\eps), \alpha x_\eps + \beta y_\eps}.
$$
Then since
$$
\jap{A(\alpha x_\eps + \beta y_\eps), \alpha x_\eps + \beta y_\eps}
 =\alpha^2\bbE_{x_\eps}(A) + \beta^2\bbE_{y_\eps}(A) + 2\alpha\beta\Re\jap{Ax_\eps,y_\eps},
$$
and $\jap{Ax_\eps,y_\eps}=\jap{Ax_\eps-\lambda x_\eps,y_\eps}+\lambda\jap{x_\eps,y_\eps}$
which tends to $0$, we see that
\[
\lim_{\eps\to0}\bbE_{z_\eps}(A)
=\lim_{\eps\to0}(\alpha^2\bbE_{x_\eps}(A) + \beta^2\bbE_{y_\eps}(A))
= \alpha^2\lambda+\beta^2\mu.
\label{lem:approx2-expectation}
\]

Combining \eqref{lem:approx2-norm} and \eqref{lem:approx2-expectation}.
We conclude that
\begin{align*}
\lim_{\eps\to0}\Delta_{z_\eps}(A)
&= \lim_{\eps\to0}\|Az_\eps\|^2 - \lim_{\eps\to0}\bbE_{z_\eps}(A)^2 \\
&= \alpha^2\lambda^2+\beta^2\mu^2 - (\alpha^2\lambda+\beta^2\mu)^2 \\
&= \alpha^2(1-\alpha^2)\lambda^2 + \beta^2(1-\beta^2)\mu^2
-2\alpha^2\beta^2\lambda\mu \\
&=\alpha^2\beta^2(\lambda-\mu)^2.
\end{align*}

\end{proof}

Now we can return to the proof of Theorem \ref{thm:Lip1}. As such, for the
remainder of this section we fix observables $A,B\in\Bsa(H)$ such that
$A\preceq B$.

\begin{lemma}\label{lem:lip}
There exists $f\in\Lip_1(\sigma(B))$ such that if $(x_\eps)$ is an
approximate eigenvector for $B$ with approximate eigenvalue $\lambda$,
then $(x_\eps)$ is also an approximate eigenvector for $A$ with
approximate eigenvalue $f(\lambda)$.
\end{lemma}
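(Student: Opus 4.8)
The plan is to define $f(\lambda)$, for each $\lambda\in\sigma(B)$, as the limiting $A$-expectation along any approximate eigenvector of $B$ with approximate eigenvalue $\lambda$, and then to verify the two required properties. First I would fix $\lambda\in\sigma(B)$ and an approximate eigenvector $(x_\eps)$ for $B$ with approximate eigenvalue $\lambda$; by Lemma \ref{lem:appox1} this gives $\Delta_{x_\eps}(B)\to 0$, and since $A\preceq B$ we immediately obtain $0\leq\Delta_{x_\eps}(A)\leq\Delta_{x_\eps}(B)\to 0$, i.e. $\Delta_{x_\eps}(A)\to 0$. Hence by the other direction of Lemma \ref{lem:appox1}, $(x_\eps)$ is an approximate eigenvector for $A$ as soon as $\bbE_{x_\eps}(A)$ converges, its limit being the corresponding approximate eigenvalue. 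The whole lemma therefore reduces to showing that $\lim_{\eps\to 0}\bbE_{x_\eps}(A)$ exists and depends only on $\lambda$, and that the resulting function is $1$-Lipschitz.

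The key step, and the main obstacle, is this existence-and-well-definedness of $f(\lambda)$. I would argue by contradiction: suppose there are approximate eigenvectors $(u_\eps),(v_\eps)$ of $B$, both with approximate eigenvalue $\lambda$, such that $\bbE_{u_\eps}(A)\to a$ and $\bbE_{v_\eps}(A)\to a'$ with $a\neq a'$. (This single statement covers both a single sequence failing to converge, by passing to two subsequences, and two different sequences producing different limits.) By the previous paragraph $(u_\eps)$ and $(v_\eps)$ are approximate eigenvectors for $A$ with distinct approximate eigenvalues $a\neq a'$, so the first part of Lemma \ref{lem:approx2} gives $\jap{u_\eps,v_\eps}\to 0$; thus $\|u_\eps+v_\eps\|^2=2+2\Re\jap{u_\eps,v_\eps}\to 2$ and $z_\eps:=(u_\eps+v_\eps)/\|u_\eps+v_\eps\|$ is well-defined for small $\eps$. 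Applying Lemma \ref{lem:approx2} to $A$ with $\alpha=\beta=1$ yields $\Delta_{z_\eps}(A)\to\tfrac14(a-a')^2>0$. On the other hand, since $u_\eps,v_\eps$ are approximate eigenvectors of $B$ for the \emph{same} value $\lambda$, the estimate $\|Bz_\eps-\lambda z_\eps\|\leq(\|Bu_\eps-\lambda u_\eps\|+\|Bv_\eps-\lambda v_\eps\|)/\|u_\eps+v_\eps\|\to 0$ shows $(z_\eps)$ is an approximate eigenvector of $B$, so $\Delta_{z_\eps}(B)\to 0$; but then $A\preceq B$ forces $\Delta_{z_\eps}(A)\leq\Delta_{z_\eps}(B)\to 0$, a contradiction. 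Consequently $\bbE_{x_\eps}(A)$ has a single subsequential limit, and being bounded by $\|A\|$ it converges, with a limit independent of the chosen approximate eigenvector. I then set $f(\lambda):=\lim_{\eps\to 0}\bbE_{x_\eps}(A)$, so that $(x_\eps)$ is an approximate eigenvector for $A$ with approximate eigenvalue $f(\lambda)$, which is precisely the asserted property.

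Finally I would check that $f\in\Lip_1(\sigma(B))$. Given $\lambda\neq\mu$ in $\sigma(B)$, I choose approximate eigenvectors $(x_\eps),(y_\eps)$ of $B$ with approximate eigenvalues $\lambda,\mu$; by the above they are also approximate eigenvectors of $A$ with approximate eigenvalues $f(\lambda),f(\mu)$. Since $\lambda\neq\mu$, Lemma \ref{lem:approx2} gives $\jap{x_\eps,y_\eps}\to 0$, so $z_\eps:=(x_\eps+y_\eps)/\|x_\eps+y_\eps\|$ is defined, and the same lemma applied once to $B$ and once to $A$ (with $\alpha=\beta=1$) gives $\Delta_{z_\eps}(B)\to\tfrac14(\lambda-\mu)^2$ and $\Delta_{z_\eps}(A)\to\tfrac14(f(\lambda)-f(\mu))^2$. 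Here the second limit is valid regardless of whether $f(\lambda)=f(\mu)$, since the computation in Lemma \ref{lem:approx2} only uses $\jap{x_\eps,y_\eps}\to 0$, which we already have from the distinctness of $\lambda$ and $\mu$. Passing to the limit in the inequality $\Delta_{z_\eps}(A)\leq\Delta_{z_\eps}(B)$ provided by $A\preceq B$ then yields $(f(\lambda)-f(\mu))^2\leq(\lambda-\mu)^2$, i.e. $|f(\lambda)-f(\mu)|\leq|\lambda-\mu|$, so $f\in\Lip_1(\sigma(B))$ and the lemma follows.
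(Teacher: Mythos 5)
Your proposal is correct and takes essentially the same route as the paper: define $f(\lambda)$ as the limit of $\bbE_{x_\eps}(A)$, establish well-definedness by the contradiction argument using $z_\eps=(x_\eps+y_\eps)/\|x_\eps+y_\eps\|$ together with Lemmas \ref{lem:appox1} and \ref{lem:approx2}, and deduce the Lipschitz bound from the same construction. The only presentational differences are that where you write ``passing to two subsequences'' the paper makes this precise by re-indexing the subsequences as $\eps$-indexed families (setting $y_\eps:=x_{\eps_n}$ for $\eps_{n+1}<\eps\leq\eps_n$) so that the definition of approximate eigenvector and Lemma \ref{lem:approx2} apply verbatim, while your explicit remark that the case $f(\lambda)=f(\mu)$ is harmless because the limit computation only requires $\jap{x_\eps,y_\eps}\to 0$ addresses a point the paper leaves implicit.
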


Throughout the proof, we will routinely use the characterization of approximate eigenvectors given in Lemma \ref{lem:appox1}.

\begin{proof}
Fix $\lambda\in\sigma(B)$ and let $(x_\eps)$ and $(y_\eps)$ be approximate
eigenvectors for $B$ with approximate eigenvalue $\lambda$. First we will show that
if $(x_\eps)$ and $(y_\eps)$ are also approximate eigenvectors for $A$, then they
must have the same approximate eigenvalue for $A$. To this end,
suppose that $(x_\eps)$ and $(y_\eps)$ are approximate eigenvectors for $A$
with approximate eigenvalues $\mu$ and $\eta$ respectively. Assume
$\mu\neq\eta$. Then by Lemma \ref{lem:approx2}, $x_\eps+y_\eps\neq0$
for sufficiently small $\eps>0$, and for these $\eps$ we set
\[
z_\eps = \frac{x_\eps + y_\eps}{\|x_\eps +  y_\eps\|}.
\label{eq:lem:lip}
\]
Clearly $(z_\eps)$ is an approximate eigenvector for $B$. Together
with Lemma \ref{lem:approx2} this implies that
$$
0<(\mu-\eta)^2
=4\lim_{\eps\to 0}\Delta_{z_\eps}(A)
\leq 4\lim_{\eps\to 0}\Delta_{z_\eps}(B) = 0,
$$
which is a contradiction. Hence $\mu=\eta$.

Next, we will show that each approximate eigenvector $(x_\eps)$ for $B$ is also
an approximate eigenvector for $A$. Observe that since $\Delta_{x_\eps}(A) \leq
\Delta_{x_\eps}(B)\to 0$ as $\eps\to 0$, we only need to show that $\bbE_{x_\eps}(A)$ converges as $\eps\to 0$. Moreover, since $\bbE_{x_\eps}(A)$ is bounded as a
function of $\eps$, it is sufficient to show that if $(\eps_n), (\eps_n')$ are sequences converging to $0$, and both $\bbE_{x_{\eps_n}}(A)$
and $\bbE_{x_{\eps'_n}}(A)$ converge as $n\to\infty$ then
$$
\lim_{n\to\infty} \bbE_{x_{\eps_n}}(A)
= \lim_{n\to\infty} \bbE_{x_{\eps'_n}}(A).
$$
Construct an approximate eigenvector $(y_\eps)$ by setting $y_\eps=x_{\eps_n}$ for
$\eps_{n+1}<\eps \leq\eps_{n}$. Construct $(y'_\eps)$ similarly using $(\eps_n')$.
Then clearly $(y_\eps)$ and $(y'_\eps)$ are approximate eigenvectors for both $A$
and $B$. Moreover, since they have the same approximate eigenvalue for $B$, they
must also have the same approximate eigenvalue for $A$. Hence
$$
\lim_{n\to\infty} \bbE_{x_{\eps_n}}(A)
= \lim_{\eps\to0} \bbE_{y_{\eps}}(A)
= \lim_{\eps\to0} \bbE_{y'_{\eps}}(A)
= \lim_{n\to\infty} \bbE_{x_{\eps'_n}}(A).
$$

For $\lambda\in\sigma(B)$, we take an approximate eigenvector $(x_\eps)$
for $B$ with approximate eigenvalue $\lambda$ and set
$$
f(\lambda):=\lim_{\eps\to0}\bbE_{x_\eps}(A).
$$
Observe that this does not depend on the choice of $(x_\eps)$ and so
$f:\sigma(B)\to\R$ is well-defined and each approximate eigenvector
for $B$ with approximate eigenvalue $\lambda$ is also an approximate
eigenvector for $A$ with approximate eigenvalue $f(\lambda)$.

Finally, we show that $f\in\Lip_1(\sigma(B))$; this argument is the same
as in the finite dimensional case. Take $\lambda,\mu\in\sigma(B)$,
$\lambda\neq\mu$, and approximate eigenvectors $(x_\eps)$ and $(y_\eps)$
for $B$ with approximate eigenvalues $\lambda$ and $\mu$, respectively.
Let $z_\eps$ be given by \eqref{eq:lem:lip}. Then it
follows from Lemma \ref{lem:approx2} that
$$
(f(\lambda)-f(\mu))^2
=4\lim_{\eps\to0}\Delta_{z_\eps}(A)
\leq 4\lim_{\eps\to0}\Delta_{z_\eps}(B)
=(\lambda-\mu)^2.
$$
We conclude that $|f(\lambda)-f(\mu)|\leq|\lambda-\mu|$.
\end{proof}

To complete the proof of Theorem \ref{thm:Lip1} we will need to
consider the decomposition of $H$ as an orthogonal sum of cyclic subspaces
for $B$. To be precise, we take a family $\{H_\kappa\}$ of closed $B$-invariant
subspaces of $H$ such that $H=\bigoplus_\kappa H_\kappa$ and
$B|_{H_\kappa}$ is cyclic. The existence of the family $\{H_{\kappa}\}$ is
well-known. Let $\psi_\kappa\in H_\kappa$
be a unit vector such that
$H_{\kappa} = \overline{\mathrm{span}}\{B^j\psi_\kappa:j\geq0\}$ and set
$\mu_\kappa$ to be the measure
$$
\mu_\kappa: \Borel(\R)\to\R, \;\;
\mu_\kappa(\omega)=\jap{E_B(\omega)\psi_\kappa,\psi_\kappa},
$$
where as before $E_B$ is the spectral measure of $B$ ($\mu_\kappa$
is just the measure $\mu_{B, \psi_\kappa}$ defined in \eqref{eq:Born}).
Let $U_\kappa:H_\kappa \to L^2(\mu_\kappa)$ be the unitary operator
satisfying
$$
U_\kappa(p(B)\psi_\kappa)=p \quad
\text{for each polynomial $p$}.
$$
Observe that for each $\omega\in\Borel(\R)$,
$U_\kappa(E_B(\omega)\psi_\kappa)=\chi_\omega$, where $\chi_\omega$
is the characteristic function of $\omega$.

\begin{lemma}\label{lem:LDT-vectors}
Let $x=\sum_\kappa x_\kappa \in H$, where $x_\kappa\in H_\kappa$. Then for each $\kappa$,
and for $\mu_\kappa$-almost every $\lambda\in\sigma(B)$ we have
$$
\lim_{\eps\to 0} \frac{1}{\mu_\kappa((\lambda-\eps, \lambda+\eps))}
\jap{x, E_B((\lambda-\eps, \lambda+\eps))\psi_\kappa}
= U_\kappa(x_\kappa)(\lambda).
$$
Consequently, $x=0$ if and only if for every $\kappa$,
$$
\lim_{\eps\to 0} \frac{1}{\mu_\kappa((\lambda-\eps, \lambda+\eps))}
\jap{x, E_B((\lambda-\eps, \lambda+\eps))\psi_\kappa} = 0
$$
$\mu_\kappa$-almost everywhere.
\end{lemma}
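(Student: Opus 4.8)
The plan is to turn the displayed limit into an instance of the Lebesgue differentiation theorem for the finite Borel measure $\mu_\kappa$ on $\R$. First I would note that each $H_\kappa$, being a closed $B$-invariant subspace of the self-adjoint operator $B$, reduces $B$ and is therefore invariant under every spectral projection $E_B(\omega)$; in particular $E_B(\omega)\psi_\kappa\in H_\kappa$. Combining this with the orthogonality of the decomposition $H=\bigoplus_\kappa H_\kappa$, the unitarity of $U_\kappa$, and the identity $U_\kappa(E_B(\omega)\psi_\kappa)=\chi_\omega$, I obtain, with $g_\kappa:=U_\kappa(x_\kappa)\in L^2(\mu_\kappa)$,
\begin{equation*}
\jap{x, E_B(\omega)\psi_\kappa} = \jap{x_\kappa, E_B(\omega)\psi_\kappa} = \jap{g_\kappa,\chi_\omega}_{L^2(\mu_\kappa)} = \int_\omega g_\kappa\dm\mu_\kappa \qquad (\omega\in\Borel(\R)).
\end{equation*}
Since $\mu_\kappa$ is a probability measure we have $L^2(\mu_\kappa)\subseteq L^1(\mu_\kappa)$, so $g_\kappa\in L^1(\mu_\kappa)$ and the right-hand side is the indefinite integral of $g_\kappa$ against $\mu_\kappa$.

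With this identity in hand, the quotient in the statement becomes exactly the symmetric difference quotient
\begin{equation*}
\frac{1}{\mu_\kappa((\lambda-\eps,\lambda+\eps))}\int_{(\lambda-\eps,\lambda+\eps)} g_\kappa\dm\mu_\kappa
\end{equation*}
of the indefinite integral of $g_\kappa$, and the assertion that it converges to $g_\kappa(\lambda)=U_\kappa(x_\kappa)(\lambda)$ for $\mu_\kappa$-almost every $\lambda$ is precisely the Lebesgue differentiation theorem applied to $g_\kappa\in L^1(\mu_\kappa)$. The one point requiring care --- and the main obstacle --- is that $\mu_\kappa$ is an arbitrary finite Borel measure rather than Lebesgue measure, so the classical statement does not apply verbatim. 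On $\R$, however, the differentiation theorem remains valid for every (locally) finite Borel measure: centred intervals form a differentiation basis for such measures, as one sees from the Besicovitch covering theorem or from the elementary Vitali-type covering argument available on the line. I would invoke this version for $g_\kappa$. I would also record that for $\mu_\kappa$-a.e.\ $\lambda$ the point $\lambda$ lies in the support of $\mu_\kappa$, so the denominator $\mu_\kappa((\lambda-\eps,\lambda+\eps))$ is strictly positive for all $\eps>0$ and the quotient is well defined.

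Finally, the stated consequence follows formally. Since the $H_\kappa$ are mutually orthogonal, $x=0$ if and only if $x_\kappa=0$ for every $\kappa$; and as $U_\kappa$ is unitary, $x_\kappa=0$ if and only if $g_\kappa=U_\kappa(x_\kappa)=0$ in $L^2(\mu_\kappa)$, i.e.\ $\mu_\kappa$-almost everywhere. By the first part this is in turn equivalent to the displayed limit vanishing $\mu_\kappa$-almost everywhere for every $\kappa$, which is exactly the claimed criterion.
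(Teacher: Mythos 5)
Your proof is correct and takes essentially the same route as the paper's: reduce $\jap{x, E_B(\omega)\psi_\kappa}$ to $\jap{x_\kappa, E_B(\omega)\psi_\kappa}$ by orthogonality of the $H_\kappa$, rewrite this as $\int_\omega U_\kappa(x_\kappa)\dm\mu_\kappa$ using unitarity of $U_\kappa$ and $U_\kappa(E_B(\omega)\psi_\kappa)=\chi_\omega$, and then invoke the Lebesgue differentiation theorem for general (not necessarily Lebesgue) finite Borel measures on $\R$ --- the paper cites \cite[Theorem 5.8.8]{Bog1} for exactly this step, where you instead sketch the Besicovitch/Vitali covering justification. Your extra remarks (that $g_\kappa\in L^1(\mu_\kappa)$, that the denominator is positive for $\mu_\kappa$-a.e.\ $\lambda$, and the formal derivation of the ``consequently'' clause) simply make explicit details the paper leaves implicit.
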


\begin{proof}
Since $E_B((\lambda-\eps, \lambda+\eps))\psi_\kappa\in H_\kappa$,
$$
\jap{x, E_B((\lambda-\eps, \lambda+\eps))\psi_\kappa}
=\jap{x_\kappa, E_B((\lambda-\eps, \lambda+\eps))\psi_\kappa}.
$$
As $U_\kappa$ is unitary, the following holds for every point $\lambda$ in the closed support of $\mu_\kappa$:
\begin{multline*}
\frac{1}{\mu_\kappa((\lambda-\eps, \lambda+\eps))}
\jap{x_\kappa, E_B((\lambda-\eps, \lambda+\eps))\psi_\kappa} \\
=
\frac{1}{\mu_\kappa((\lambda-\eps, \lambda+\eps))}
\int_{(\lambda-\eps, \lambda+\eps)} U_\kappa(x_\kappa) \dm\mu_\kappa.
\end{multline*}
By a generalised version of the Lebesgue Differentiation Theorem (for positive regular Borel measures)
\cite[Theorem 5.8.8]{Bog1}, the right hand side converges to 
$U_\kappa(x_\kappa)(\lambda)$ for $\mu_\kappa$-almost every $\lambda$.
\end{proof}

In light of Lemma \ref{lem:LDT-vectors}, for each $\kappa$, $\lambda\in\sigma(B)$
and $\eps>0$ let us introduce the vector
$$
\varphi^\kappa_{\lambda, \eps}:=
\frac{1}{\sqrt{\mu_\kappa((\lambda-\eps, \lambda+\eps))}}
E_B((\lambda-\eps, \lambda+\eps))\psi_\kappa \in H_\kappa.
$$
Note that $\|\varphi^\kappa_{\lambda, \eps}\|=1$ and moreover, since
$$
\|(B- \lambda I)\varphi^\kappa_{\lambda, \eps}\|^2
= \frac{1}{\mu_\kappa((\lambda-\eps, \lambda+\eps))}
\int_{(\lambda-\eps, \lambda+\eps)} |t-\lambda|^2 \dm\mu_\kappa(t),
$$
we have the estimate
\[
\|(B- \lambda I)\varphi^\kappa_{\lambda, \eps}\|
= O(\eps) \;\; \text{as} \;\; \eps\to 0.
\label{eq:eps-bigo}
\]
In particular, $(\varphi^\kappa_{\lambda, \eps})$ is an approximate eigenvector
for $B$, as $\eps\to 0$, with approximate eigenvalue $\lambda$.

We will also need the following measure-theoretic fact:
for $\mu_\kappa$-almost all $\lambda$
$$
\lim_{\eps\to 0} \frac{\eps}{\mu_\kappa((\lambda-\eps, \lambda+\eps))}
$$
is well-defined and finite; see e.g. \cite[Theorem 5.8.8]{Bog1}.
And so for $\mu_\kappa$-almost every $\lambda$,
\[
\frac{1}{\mu_\kappa((\lambda-\eps, \lambda+\eps))}
= O(1/\eps), \;\; \text{as} \;\; \eps\to 0.
\label{eq:mu-bigo}
\]

We are now in a position to complete the proof of Theorem \ref{thm:Lip1}.

\begin{proof}[Proof of Theorem \ref{thm:Lip1} \ref{thm:Lip1:pure} $\implies$ \ref{thm:Lip1:function}.]

Let $f\in\Lip_1(\sigma(B))$ be the function given by Lemma \ref{lem:lip}.
We will show that for each $\kappa$ and each unit vector $x\in H$,
$$
\lim_{\eps\to 0}
\frac{\jap{Ax,\varphi^\kappa_{\lambda, \eps}}}{\sqrt{\mu_\kappa((\lambda-\eps, \lambda+\eps))}}
=
\lim_{\eps\to 0}
\frac{\jap{f(B)x,\varphi^\kappa_{\lambda, \eps}}}{\sqrt{\mu_\kappa((\lambda-\eps, \lambda+\eps))}}
$$
for $\mu_\kappa$-almost every $\lambda\in\sigma(B)$. It will then follow from
Lemma \ref{lem:LDT-vectors} that $A=f(B)$.

First observe that since $(\varphi^\kappa_{\lambda, \eps})$ is an approximate
eigenvector for $B$ with approximate eigenvalue $\lambda$, it is also an
approximate eigenvector for both $A$ and $f(B)$ with approximate eigenvalue $f(\lambda)$; for $f(B)$ this is clear and for $A$ this follows from Lemma
\ref{lem:lip}.

Take a unit vector $x\in H$ and write $x=\sum_\kappa x_\kappa$, where
$x_\kappa\in H_\kappa$. Then for any $\eps>0$ we have
\begin{align*}
\frac{\jap{Ax,\varphi^\kappa_{\lambda, \eps}}}{\sqrt{\mu_\kappa((\lambda-\eps, \lambda+\eps))}}
&=
\frac{\jap{x,(A-\jap{A\varphi^\kappa_{\lambda, \eps},
\varphi^\kappa_{\lambda, \eps}}I)\varphi^\kappa_{\lambda, \eps}} }
{\sqrt{\mu_\kappa((\lambda-\eps, \lambda+\eps))}} \\
&+ \frac{\jap{A\varphi^\kappa_{\lambda, \eps},\varphi^\kappa_{\lambda, \eps}}
\jap{x,\varphi^\kappa_{\lambda, \eps}}}
{\sqrt{\mu_\kappa((\lambda-\eps, \lambda+\eps))}}.
\end{align*}
Observe that by \eqref{eq:var-eigenvector} for $\mu_\kappa$-almost every $\lambda$,
\begin{align*}
\left|\frac{\jap{x,(A-\jap{A\varphi^\kappa_{\lambda, \eps},\varphi^\kappa_{\lambda, \eps}}I)
\varphi^\kappa_{\lambda, \eps}}}{\sqrt{\mu_\kappa((\lambda-\eps, \lambda+\eps))}}
\right|
&\leq
\sqrt{\frac{\Delta_{\varphi^\kappa_{\lambda, \eps}}(A)}
{\mu_\kappa((\lambda-\eps, \lambda+\eps))}} \\
&\leq
\sqrt{\frac{\Delta_{\varphi^\kappa_{\lambda, \eps}}(B)}
{\mu_\kappa((\lambda-\eps, \lambda+\eps))}} \\
&\leq
\frac{\|(B- \lambda I)\varphi^\kappa_{\lambda, \eps}\|}
{\sqrt{\mu_\kappa((\lambda-\eps, \lambda+\eps))}} \\
&= O(\sqrt{\eps}) \;\;\text{as} \;\; \eps\to 0,
\end{align*}
where the last estimate followed from combining \eqref{eq:eps-bigo} and
\eqref{eq:mu-bigo}. Therefore,
\begin{align*}
\lim_{\eps\to 0}
\frac{\jap{Ax,\varphi^\kappa_{\lambda, \eps}}}{\sqrt{\mu_\kappa((\lambda-\eps, \lambda+\eps))}}
&=
\lim_{\eps\to 0}
\frac{\jap{A\varphi^\kappa_{\lambda, \eps},\varphi^\kappa_{\lambda, \eps}}
\jap{x,\varphi^\kappa_{\lambda, \eps}}}
{\sqrt{\mu_\kappa((\lambda-\eps, \lambda+\eps))}} \\
&=
\lim_{\eps\to 0}
\jap{A\varphi^\kappa_{\lambda, \eps},\varphi^\kappa_{\lambda, \eps}}
\lim_{\eps\to 0}
\frac{\jap{x,\varphi^\kappa_{\lambda, \eps}}}
{\sqrt{\mu_\kappa((\lambda-\eps, \lambda+\eps))}} \\
&= f(\lambda) \cdot U_\kappa x_\kappa(\lambda)
\quad \text{for $\mu_\kappa$-almost all $\lambda$}.
\end{align*}
The same argument with $f(B)$ instead of $A$ also shows that
$$
\lim_{\eps\to 0}
\frac{\jap{f(B)x,\varphi^\kappa_{\lambda, \eps}}}{\sqrt{\mu_\kappa((\lambda-\eps, \lambda+\eps))}}
= f(\lambda)\cdot U_\kappa x_\kappa(\lambda)
\quad \text{for $\mu_\kappa$-almost all $\lambda$},
$$
which completes the proof.
\end{proof}


\section{Proof of Theorem \ref{thm:pres}}\label{sec:pres}

We start with proving a few lemmas.

\begin{lemma}\label{lem:comm}
Assume that $A,B\in\Bsa(H)$ are two observables and that the spectrum of $A$ contains finitely many elements: $\#\sigma(A)<\infty$. In that case the following two conditions are equivalent:
\begin{enumerate}[label=(\roman*)]
    \item\label{lem:comm:1} $A$ and $B$ commute;
    \item\label{lem:comm:2} there exists an observable $C\in\Bsa(H)$ such that $A\preceq C$ and $B\preceq C$.
\end{enumerate}
\end{lemma}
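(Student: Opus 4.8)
The plan is to translate both conditions into statements about functional calculus by invoking Theorem \ref{thm:Lip1}, which says precisely that $X \preceq Y$ is equivalent to $X = h(Y)$ for some $h \in \Lip_1(\sigma(Y))$. Once this is done, one implication becomes trivial and the other reduces to an explicit construction of a single observable together with two suitable $1$-Lipschitz functions.

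First I would dispatch the implication \ref{lem:comm:2}$\implies$\ref{lem:comm:1}, which I expect to be immediate and to use nothing about the cardinality of $\sigma(A)$. If some $C \in \Bsa(H)$ satisfies $A \preceq C$ and $B \preceq C$, then by Theorem \ref{thm:Lip1} there are functions $f, g \in \Lip_1(\sigma(C))$ with $A = f(C)$ and $B = g(C)$. Since the continuous functional calculus of $C$ is a commutative algebra, any two functions of $C$ commute, so $AB = f(C)g(C) = g(C)f(C) = BA$.

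The content of the lemma is in the reverse implication \ref{lem:comm:1}$\implies$\ref{lem:comm:2}, and this is exactly where the finiteness of $\sigma(A)$ enters. Write $\sigma(A) = \{a_1, \dots, a_n\}$ with orthogonal eigenprojections $P_1, \dots, P_n$, so that $A = \sum_i a_i P_i$ and $H = \bigoplus_i \ran P_i$. Since $B$ commutes with $A$ it commutes with each $P_i$, hence restricts to an observable $B_i := B|_{\ran P_i}$. My candidate is $C := \alpha A + B$ for a sufficiently large scalar $\alpha > 0$. On $\ran P_i$ we have $C = \alpha a_i I + B_i$, so $\sigma(C)$ is the union of the bands $\alpha a_i + \sigma(B_i)$, and these bands become pairwise disjoint once $\alpha \delta$ exceeds $\diam \sigma(B)$, where $\delta := \min_{i \neq j}|a_i - a_j| > 0$. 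I would then define $f$ on $\sigma(C)$ to be the constant $a_i$ on the $i$-th band and $g$ to be $t \mapsto t - \alpha a_i$ on the $i$-th band, which by construction give $f(C) = A$ and $g(C) = B$.

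The step I expect to carry the real weight is checking that $f$ and $g$ are \emph{simultaneously} $1$-Lipschitz for large $\alpha$. Within a single band, $g$ has slope exactly $1$ and $f$ is constant, so both are fine there; the only issue is across two distinct bands $i \neq j$. Writing $s = \alpha a_i + b_s$ and $t = \alpha a_j + b_t$ with $b_s \in \sigma(B_i)$, $b_t \in \sigma(B_j)$, one has $|s-t| \geq \alpha\delta - \diam\sigma(B)$, which grows linearly in $\alpha$, whereas $|f(s)-f(t)| = |a_i - a_j|$ and $|g(s)-g(t)| = |b_s - b_t| \leq \diam\sigma(B)$ remain bounded independently of $\alpha$. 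Choosing $\alpha$ beyond an explicit threshold depending only on $\delta$ and $\|B\|$ therefore forces both $|f(s)-f(t)| \leq |s-t|$ and $|g(s)-g(t)| \leq |s-t|$, giving $f, g \in \Lip_1(\sigma(C))$. Applying Theorem \ref{thm:Lip1} once more, now in the direction \ref{thm:Lip1:function}$\implies$\ref{thm:Lip1:pure}, yields $A \preceq C$ and $B \preceq C$, which completes the argument.
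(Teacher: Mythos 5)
Your proposal is correct and takes essentially the same approach as the paper: both implications are mediated by Theorem \ref{thm:Lip1}, and for \ref{lem:comm:1}$\implies$\ref{lem:comm:2} the paper likewise decomposes $B=\bigoplus_j B_j$ along the eigenspaces of $A$, builds $C$ by shifting the blocks $B_j$ far apart, and recovers $A$ and $B$ as a locally constant and a locally unit-slope $1$-Lipschitz function of $C$. The only difference is the choice of shifts: you take $C=\alpha A+B$ (bands at $\alpha a_i+\sigma(B_i)$, with a threshold for $\alpha$ depending on the minimal gap of $\sigma(A)$), whereas the paper uses equally spaced shifts $C=\bigoplus_j(B_j+j\beta)$ with $\beta>4\tau+\diam(\sigma(A))$, an immaterial variation.
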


\begin{proof}
\ref{lem:comm:2}$\implies$\ref{lem:comm:1} is a trivial consequence
of Theorem \ref{thm:Lip1}. As for the reverse, assume \ref{lem:comm:1} and that $\sum_{j=1}^n\lambda_jP_j$ is the spectral decomposition of $A$ with some projections $P_j$ and real numbers $\lambda_j$. Since $A$ and $B$ commute, $B$ commutes with each $P_j$, hence $B=\oplus_{j=1}^nB_j$ with some $B_j\in\Bsa(\ran P_j)$. Choose positive numbers $\beta,\tau$ such that $\sigma(B_j)\subseteq [-\tau,\tau]$ for all $j$, and $\beta > 4\tau+\diam(\sigma(A))$. Define the observable
$$
C := \bigoplus_{j=1}^n (B_j + j\beta),
$$
whose spectrum is the disjoint union $\cup_{j=1}^n(\sigma(B_j)+j\beta)$. Consider the functions
$$
f\colon \bigcup_{j=1}^n[-\tau+j\beta,\tau+j\beta]
\to\R, \quad f(x) = \lambda_j \;\;\text{if } |x-j\beta|\leq\tau,
$$
$$
g\colon \bigcup_{j=1}^n[-\tau+j\beta,\tau+j\beta]
\to\R, \quad g(x) = x-j\beta \;\;\text{if } |x-j\beta|\leq\tau.
$$
Clearly, $f$ and $g$ are Lipschitz functions with constant $1$, $A=f(C)$ and $B=g(C)$. This completes the proof.
\end{proof}

Recall that in case of the Loewner order any observable $A\in\Bsa(H)$ is completely determined by the set of those obsevables whose spectrum contains at most $2$ elements and that are below $A$. The following lemma shows that a similar statement does not hold for the variance order.

\begin{lemma}\label{lem:3-spec}
Assume that $\varclass{A} = \varclass{0P_0 + t_1P_1 + (t_1+t_2)P_2}$ where $t_2,t_1>0$ and $P_0,P_1,P_2$ are non-trivial pairwise orthogonal projections such that $I=P_0+P_1+P_2$. 
Then
\begin{align}\label{eq:3-spec-2-below}
    &\{B\in\Bsa(H)\colon B\preceq A, \#\sigma(B)\leq 2\} \nonumber\\
    &\qquad = \left(
    \bigcup_{0\leq q\leq t_1}\varclass{qP_0}\right)\bigcup
    \left(\bigcup_{0\leq r\leq\min(t_1,t_2)}\varclass{rP_1}\right)\bigcup
    \left(\bigcup_{0\leq s\leq t_2}\varclass{sP_2}\right).
\end{align}
\end{lemma}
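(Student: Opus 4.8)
The plan is to translate the condition $B \preceq A$ into the language of $1$-Lipschitz functions via Theorem \ref{thm:Lip1}, and then to impose the constraint $\#\sigma(B)\le 2$ directly. Fix the representative $A = 0P_0 + t_1P_1 + (t_1+t_2)P_2$, so that $\sigma(A) = \{0, t_1, t_1+t_2\}$, and recall that since $\preceq$ descends to variance-equivalence classes it suffices to work with this $A$. By Theorem \ref{thm:Lip1}, $B \preceq A$ holds if and only if $B = f(A)$ for some $f \in \Lip_1(\sigma(A))$. Such an $f$ is determined by the triple $(a,b,c) := (f(0), f(t_1), f(t_1+t_2))$, and its $1$-Lipschitz property is equivalent to the two inequalities $|a-b|\le t_1$ and $|b-c|\le t_2$, the remaining constraint $|a-c|\le t_1+t_2$ being a consequence of these by the triangle inequality. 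For such $f$ one has $B = f(A) = aP_0 + bP_1 + cP_2$.

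Next I would use that $P_0,P_1,P_2$ are non-trivial, so that $\sigma(B) = \{a,b,c\}$ as a set; hence $\#\sigma(B)\le 2$ holds exactly when at least two of $a,b,c$ coincide. This splits the analysis into three cases, and in each I would rewrite $B$ as a scalar multiple of $I$ plus a multiple of a single projection and then pass to its class using the Corollary, $\varclass{X} = \{X + cI, -X + cI : c \in \R\}$, which in particular gives $\varclass{X} = \varclass{-X}$. If $b = c$, then $B = bI + (a-b)P_0$, so $\varclass{B} = \varclass{|a-b|P_0}$ with $|a-b|\le t_1$; if $a = b$, then $B = aI + (c-a)P_2$, so $\varclass{B} = \varclass{|c-a|P_2}$ with $|c-a|\le t_2$; and if $a = c$, then $B = aI + (b-a)P_1$, so $\varclass{B} = \varclass{|b-a|P_1}$, where now both Lipschitz inequalities are active and together force $|b-a|\le \min(t_1,t_2)$. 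These three cases correspond precisely to the three unions on the right-hand side of \eqref{eq:3-spec-2-below}, establishing the inclusion $\subseteq$.

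For the reverse inclusion I would verify that each class on the right-hand side lies in the left-hand set. For $qP_0$ with $0\le q\le t_1$, the function with $f(0)=q$ and $f(t_1)=f(t_1+t_2)=0$ lies in $\Lip_1(\sigma(A))$ (as $|q|\le t_1$ and $0\le t_2$) and satisfies $f(A)=qP_0$, so $qP_0\preceq A$; moreover every member of $\varclass{qP_0}$ is of the form $\pm qP_0 + cI$ and hence has at most two spectral values, while $\preceq A$ depends only on the class. The ranges $0\le r\le\min(t_1,t_2)$ for $rP_1$ and $0\le s\le t_2$ for $sP_2$ are obtained from the analogous choices (outer values equal with $f(t_1)=r$, respectively lower two values equal with $f(t_1+t_2)=s$), the admissible parameter ranges being dictated by exactly the same Lipschitz inequalities as before.

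The computations are routine; the only step requiring genuine care is the case $a=c$, in which the coinciding values are the two outer eigenvalues of $A$ and the surviving projection is $P_1$. Here both inequalities $|a-b|\le t_1$ and $|b-c|\le t_2$ bind simultaneously, which is what produces the $\min(t_1,t_2)$ bound and thereby distinguishes the middle union from the other two. I expect this bookkeeping to be the main, and essentially the only, obstacle.
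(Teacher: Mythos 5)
Your proposal is correct and takes essentially the same route as the paper: both reduce the problem via Theorem~\ref{thm:Lip1} to classifying the $1$-Lipschitz functions on the three-point spectrum $\{0,t_1,t_1+t_2\}$ that take at most two values, with the case $f(0)=f(t_1+t_2)$ producing the $\min(t_1,t_2)$ bound in the middle union of \eqref{eq:3-spec-2-below}. The paper compresses this to a couple of sentences; your version just spells out the bookkeeping (including the reverse inclusion) in full.
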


\begin{proof}
One only needs to determine the set of those functions $f\in\Lip_1(\{0,t_1,t_1+t_2\})$ such that $\#\{f(0),f(t_1),f(t_1+t_2)\}\leq2$. If $f$ is not a constant function, then two of these values coincide. For instance, if $f(0)=f(t_1+t_2)$, then we must have $|f(0)-f(t_1)|\leq \min(t_1,t_2)$, which gives the middle union on the right-hand side of \eqref{eq:3-spec-2-below}.
\end{proof}

Using the notation of the above lemma, assume that $t_2 > t_1>0$, $\widehat{A}\in\Bsa(H)$, $\#\sigma(\widehat{A})=3$ and
\begin{equation}\label{eq:corr-cond}
    \{B\in\Bsa(H)\colon B\preceq A, \#\sigma(B)\leq 2\} = \{B\in\Bsa(H)\colon B\preceq \widehat{A}, \#\sigma(B)\leq 2\}.
\end{equation}
By Lemma \ref{lem:3-spec}, this holds if and only if either 
\begin{equation}\label{eq:corr-c1}
    \varclass{\widehat{A}}=\varclass{A}=\varclass{0P_0 + t_1P_1 + (t_1+t_2)P_2},    
\end{equation}
or
\begin{equation}\label{eq:corr-c2}
    \varclass{\widehat{A}}=\varclass{0P_2 + t_2P_0 + (t_1+t_2)P_1}.
\end{equation}
On the other hand, if $t_2 = t_1>0$, then \eqref{eq:corr-cond} is equivalent to either \eqref{eq:corr-c1}, or \eqref{eq:corr-c2}, or
\begin{equation}\label{eq:corr-c3}
    \varclass{\widehat{A}}=\varclass{0P_1 + t_1P_2 + 2t_1P_0}.
\end{equation}

Nonetheless, the variance order satisfies at least the following property, which will be useful in the proof of Theorem \ref{thm:pres}.

\begin{lemma}\label{lem:for-ind}
Assume that $n\in\N, n\geq 4$, $A_1,A_2\in\Bsa(H)$ are two observables such that $\#\sigma(A_1)=\#\sigma(A_2)=n$, and
\begin{equation}\label{eq:info}
    \{B\in\Bsa(H)\colon B\preceq A_1, \#\sigma(B)<n\} = \{B\in\Bsa(H)\colon B\preceq A_2, \#\sigma(B)<n\}.
\end{equation}
Then we have $\varclass{A_1}=\varclass{A_2}$.
\end{lemma}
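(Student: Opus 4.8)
The plan is to use Theorem \ref{thm:Lip1} to turn the hypothesis into a purely combinatorial statement about eigenvalues, and then to extract enough numerical invariants of the common set in \eqref{eq:info} to reconstruct the spectrum of $A_1$ up to an affine isometry. Write $S$ for the common set in \eqref{eq:info}. By Theorem \ref{thm:Lip1}, $B\preceq A_i$ means $B=f(A_i)$ for some $f\in\Lip_1(\sigma(A_i))$, and $\#\sigma(B)<n$ says precisely that $f$ is \emph{not injective} on the $n$-point set $\sigma(A_i)$. First I would recover a common spectral resolution: for a projection $P$ and small $t>0$, one has $tP\in S$ iff $P=\sum_{i\in J}P_i^{(1)}$ is a nontrivial sum of spectral projections $P_i^{(1)}$ of $A_1$ (since $tP=f(A_1)$ forces $f$ to be two-valued with level sets unions of eigenspaces, and $\#\sigma(tP)=2<n$ uses $n\geq 3$). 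Hence the minimal such multiples in $S$, with respect to range inclusion, are exactly the spectral projections of $A_1$; the same holds for $A_2$. As $S$ is common, $A_1$ and $A_2$ share their spectral projections, so after relabelling $A_1=\sum_{i=1}^n\lambda_iP_i$ and $A_2=\sum_{i=1}^n\nu_iP_i$ with common pairwise orthogonal $P_i$, distinct $\lambda_i$ and distinct $\nu_i$. Every element of $S$ is then diagonal in this system, $B=\sum_i b_iP_i$, and $\varclass{A_1}=\varclass{A_2}$ reduces to proving $|\nu_i-\nu_j|=|\lambda_i-\lambda_j|$ for all $i,j$; equal pairwise distances on the line force $\nu_i=\pm\lambda_i+c$ with a global sign (a three-point argument fixes the sign), i.e.\ $A_2=\pm A_1+cI$.

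The device I would use to compare the two distance patterns is, for each pair $i\neq j$, the invariant
\[
c_{ij}:=\sup\{\,|b_i-b_j| : \textstyle\sum_k b_kP_k\in S\,\},
\]
which is unambiguous because $S$ is common and $b_i$ is the intrinsic scalar by which $B$ acts on $\ran P_i$. Computing $c_{ij}$ from the $A_1$-side, I would use that a function on a finite subset of $\R$ is $1$-Lipschitz iff it is $1$-Lipschitz on consecutive points, so that the $1$-Lipschitz functions form a box whose side-lengths are the consecutive gaps of $\sigma(A_1)$. One then finds $c_{ij}=|\lambda_i-\lambda_j|$ whenever $\{i,j\}$ is \emph{not} the pair of extreme (smallest and largest) eigenvalues — the extremal $1$-Lipschitz ``ramp'' between $\lambda_i$ and $\lambda_j$ can be made non-injective at no cost by flattening it at a point lying beyond the ramp — whereas for the diameter pair one is forced to collapse a gap, giving $c_{ij}=|\lambda_i-\lambda_j|-g_{\min}$, where $g_{\min}$ is the smallest consecutive gap. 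The identical formulas hold on the $A_2$-side.

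Comparing the two computations, for every pair that is non-extreme for \emph{both} $A_1$ and $A_2$ we obtain $|\lambda_i-\lambda_j|=|\nu_i-\nu_j|$, and since $n\geq 4$ there are many such pairs. What remains is to match the two diameter pairs and then the two diameters themselves, and I expect this endgame to be the main obstacle. I would argue that both diameter pairs maximise the common function $c$, and that a pairwise distance can equal $(\text{diameter}-g_{\min})$ only when the smallest gap sits at an end of the configuration; this rigidity, combined with the collinearity identity $|\lambda_i-\lambda_j|=|\lambda_i-\lambda_m|+|\lambda_m-\lambda_j|$ through an intermediate index $m$, forces the two diameter pairs to coincide and then the two diameters to agree. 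Here the hypothesis $n\geq 4$ is used twice over: it guarantees the room needed to render the relevant test functions non-injective, and it supplies an intermediate index $m$ distinct from the four endpoints of the two candidate diameter pairs. The most delicate point is the boundary case $n=4$ with \emph{disjoint} candidate diameter pairs, where no such $m$ exists; there I would substitute the sharp equalities $c_{ij}=|\lambda_i-\lambda_j|-g_{\min}$ and $c_{ij}=|\nu_i-\nu_j|-g_{\min}'$ for the two extreme pairs into the system of cross-distance equalities and derive a numerical contradiction from $g_{\min}>0$. That the argument must break at $n=3$ is confirmed by the genuine ambiguity exhibited in \eqref{eq:corr-c2}.
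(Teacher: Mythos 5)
Your proposal is correct and follows essentially the same route as the paper's own proof: recovering common spectral projections so that $A_1,A_2$ are simultaneously diagonal, introducing the pairwise supremum invariant $c_{ij}$ (identical to the paper's $q_{j,k}$), computing it by the same two-case formula ($|\lambda_i-\lambda_j|$ for non-extreme pairs, $\diam(\sigma(A_i))-g_{\min}$ for the extreme pair), and reconstructing all pairwise distances by additivity through an intermediate index, which is exactly where $n\geq 4$ enters. The only divergence is the final bookkeeping --- the paper runs a case analysis on whether $\max q_{j,k}$ is attained by three, two, or one pair(s) and reads off all distances directly, rather than first proving the two diameter pairs coincide --- and your worried-about ``disjoint diameter pairs'' case at $n=4$ is in fact vacuous, since any pair attaining $\max c$ must contain an extreme index of $A_1$ (your rigidity claim already shows this, and your substitution fallback closes it independently).
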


\begin{proof}
Consider an $A\in\Bsa(H)$ with an $n$-element spectrum $\{\lambda_1,\dots,\lambda_n\}$. Let $\omega\subseteq\sigma(A),\; \omega\neq\emptyset,\; \omega\neq\sigma(A)$, and $f$ a function that is constant on both $\omega$ and $\sigma(A)\setminus\omega$. Clearly, $f\in\Lip_1(\sigma(A))$ if and only if the distance between its two different values is at most $t_\omega:=\min\{|\lambda-\mu|\colon \lambda\in\omega, \mu\in\sigma(A)\setminus\omega\}$. Therefore, 
$$
\{B\in\Bsa(H)\colon B\preceq A, \#\sigma(B)\leq 2\} = \bigcup\{\varclass{t E_A(\omega)}\colon \omega\subseteq\sigma(A), 0\leq t\leq t_\omega\}.
$$
From here we conclude that the information \eqref{eq:info} implies $A_i = \sum_{j=1}^n \lambda_j^iP_j$ with some pairwise orthogonal projections $\{P_j\}_{j=1}^n$ such that $P_1+\dots+P_n = I$ and some distinct real numbers $\{\lambda_j^i\}_{j=1}^n$ $(i=1,2)$.

Next, for each $j$ choose an arbitrary unit vector $x_j\in\ran P_j$. For any $j,k\in\{1,2,\dots,n\}$, $j<k$, define the following quantity that clearly does not depend on $i$:
\begin{align*}
    q_{j,k} &:= \sup\{|\langle Bx_j,x_j\rangle - \langle Bx_k,x_k\rangle|\colon B\preceq A_i, \#\sigma(B)<n\} \\
    &= \sup\{|\langle f(A_i)x_j,x_j\rangle - \langle f(A_i)x_k,x_k\rangle|\colon f\in\Lip_1(\sigma(A_i)), \# f(\sigma(A_i))<n\} \\
    &= \sup\{|f(\lambda_j^i)-f(\lambda_k^i)|\colon f\in\Lip_1(\sigma(A_i)), \# f(\sigma(A_i))<n\}.
\end{align*}
Note that $q_{j,k} \leq |\lambda_j^i-\lambda_k^i|$ $(i=1,2)$. First, if $|\lambda_j^i-\lambda_k^i| < \diam(\sigma(A_i))$, then 
$$
    \Lip_1(\sigma(A_i)) \ni f(x) = \left\{ \begin{matrix}
        \min(\lambda_j^i,\lambda_k^i), & \text{if } x<\min(\lambda_j^i,\lambda_k^i) \\
        x, & \text{if } \min(\lambda_j^i,\lambda_k^i)\leq x\leq \max(\lambda_j^i,\lambda_k^i) \\
        \max(\lambda_j^i,\lambda_k^i), & \text{if } x>\max(\lambda_j^i,\lambda_k^i)
    \end{matrix} \right. 
$$
and $\# f(\sigma(A_i))<n$. Therefore $q_{j,k} = |\lambda_j^i-\lambda_k^i|$ follows in that case.
As for the $|\lambda_j^i-\lambda_k^i| = \diam(\sigma(A_i))$ case, note that for any $f\in\Lip_1(\sigma(A_i))$ with $f(\lambda_l^i)=f(\lambda_m^i)$ we have $|f(\lambda_j^i)-f(\lambda_k^i)| \leq \diam(\sigma(A_i)) - |\lambda_l^i-\lambda_m^i|$. Hence we obtain 
\begin{equation}\label{eq:max}
q_{j,k} = \diam(\sigma(A_i)) - \min\{|\lambda-\mu|\colon\lambda,\mu\in\sigma(A_i),\lambda\neq\mu\}.
\end{equation}

We now show that the metric structure of $\sigma(A_i)$ is encoded in the information $\{q_{j,k}\colon 1\leq j<k\leq n\}$.
Observe that the right-hand side of \eqref{eq:max} is the maximum of $\{q_{j,k}\colon 1\leq j<k\leq n\}$, and that at most three of these numbers are equal to this maximum. 
More precisely, assume $\lambda_{j_1}^1 < \lambda_{j_2}^1 < \dots < \lambda_{j_{n-1}}^1 < \lambda_{j_{n}}^1$ are the spectrum points of $A_1$ (similar conclusions hold for $A_2$, though the indexing might be different). We have three cases: 
\begin{itemize}
	\item First, $|\lambda_{j_1}^1-\lambda_{j_2}^1| = |\lambda_{j_{n-1}}^1-\lambda_{j_{n}}^1| = \min\{|\lambda-\mu|\colon\lambda,\mu\in\sigma(A_1),\lambda\neq\mu\}$ holds if and only if $\max\{q_{j,k}\colon 1\leq j<k\leq n\}$ is attained exactly for three pairs, in which case it is attained for $(j,k) = (j_1,j_{n-1}), (j_1,j_{n}), (j_2,j_{n})$. 
	\item Second, either $|\lambda_{j_1}^1-\lambda_{j_2}^1| > |\lambda_{j_{n-1}}^1-\lambda_{j_{n}}^1| = \min\{|\lambda-\mu|\colon\lambda,\mu\in\sigma(A_1),\lambda\neq\mu\}$ or $|\lambda_{j_{n-1}}^1-\lambda_{j_{n}}^1| > |\lambda_{j_1}^1-\lambda_{j_2}^1| = \min\{|\lambda-\mu|\colon\lambda,\mu\in\sigma(A_1),\lambda\neq\mu\}$ holds if and only if $\max\{q_{j,k}\colon 1\leq j<k\leq n\}$ is attained exactly for two pairs, in which case it is attained for $(j,k) = (j_1,j_{n-1}), (j_1,j_{n})$, or $(j,k) = (j_2,j_{n}), (j_1,j_{n})$, respectively. 
	\item Third, $|\lambda_{j_{n-1}}^1-\lambda_{j_{n}}^1|, |\lambda_{j_1}^1-\lambda_{j_2}^1| > \min\{|\lambda-\mu|\colon\lambda,\mu\in\sigma(A_1),\lambda\neq\mu\}$ holds if and only if $\max\{q_{j,k}\colon 1\leq j<k\leq n\}$ is attained uniquely for $(j,k) = (j_1,j_{n})$.
\end{itemize}
In light of this, we conclude the following:
\begin{itemize}
	\item If $\max\{q_{j,k}\colon 1\leq j<k\leq n\}$ is attained exactly for three pairs, say for $(j,k) = (k_1,k_{n-1}), (k_1,k_{n}), (k_2,k_{n})$, then for each $i$ we must have $\{\min\sigma(A_i),\max\sigma(A_i)\}=\{\lambda_{k_1}^i,\lambda_{k_n}^i\}$, as these are the only indexes appearing twice. Moreover, $|\lambda_{k_1}^i-\lambda_{k_n}^i| = |\lambda_{k_1}^i-\lambda_{k_{n-1}}^i| + |\lambda_{k_{n-1}}^i-\lambda_{k_n}^i| = q_{k_1,k_{n-1}}+q_{k_{n-1},k_n}$ $(i=1,2)$.
	\item If $\max\{q_{j,k}\colon 1\leq j<k\leq n\}$ is attained exactly for two pairs, say for $(j,k) = (k_1,k_{n-1}), (k_1,k_{n})$, then for each $i$ we must have $\lambda_{k_1}^i\in\{\min\sigma(A_i),\max\sigma(A_i)\}$, as this index is the only one appearing twice. Moreover, $|\lambda_{k_1}^i-\lambda_{k_m}^i| = |\lambda_{k_1}^i-\lambda_{j}^i|+|\lambda_{j}^i-\lambda_{k_m}^i| = q_{k_1,j}+q_{j,k_m}$ for any $j\in\{1,2,\dots,n\}\setminus\{k_1,k_{n-1},k_{n}\}$ ($m\in\{n-1,n\}$, $i\in\{1,2\}$).
	\item If $\max\{q_{j,k}\colon 1\leq j<k\leq n\}$ is attained uniquely for $(j,k) = (k_1,k_{n})$, then for each $i$ we must have $\{\min\sigma(A_i),\max\sigma(A_i)\}=\{\lambda_{k_1}^i,\lambda_{k_n}^i\}$. Moreover, $|\lambda_{k_1}^i-\lambda_{k_n}^i| = |\lambda_{k_1}^i-\lambda_{j}^i|+|\lambda_{j}^i-\lambda_{k_n}^i| = q_{k_1,j}+q_{j,k_n}$ for any $j\in\{1,2,\dots,n\}\setminus\{k_1,k_{n}\}$ $(i=1,2)$.
\end{itemize}
In all of the above scenarios we obtain $|\lambda_j^1-\lambda_k^1| = |\lambda_j^2-\lambda_k^2|$ for all $1\leq j < k\leq n$. Hence $A_2 = \pm A_1+cI$ with some real $c$.
\end{proof}

We point out that the above lemma does not hold for general operators. Namely, for any observable $A\in\Bsa(H)$ with $\sigma(A) = [0,1]$ the only observables $B\in\Bsa(H)$ that satisfies $\#\sigma(B)<\infty$ and $B\preceq A$ are the scalar multiples of the identity, since the only Lipschitz functions on $[0,1]$ that have a finite range are the constant functions.

For any projection $P$ we introduce the notation $P^\perp := I-P$.
Recall the following theorem which we shall use in the proof of Theorem \ref{thm:pres}. The proof can be found in \cite[Theorem 4.3]{GeSeCMP} or \cite[Theorem 2.8]{MoS2}.

\begin{theorem}\label{thm:ProjComm}
Let $H$ be a Hilbert space of dimension at least $3$ and $\phi\colon \Proj(H)\to \Proj(H)$ be a bijective mapping that preserves commutativity in both directions, i.e.
\begin{equation}\label{eq:ProjComm}
	PQ = QP \;\; \iff \;\; \phi(P)\phi(Q) = \phi(Q)\phi(P) \qquad (P,Q \in \Proj(H)).
\end{equation}
Then there exists a unitary or antiunitary operator $U\colon H\to H$ such that
\begin{equation*}
	\phi(P) \in \{ UPU^*, UP^\perp U^* \} \qquad (P \in \Proj(H)).
\end{equation*}
\end{theorem}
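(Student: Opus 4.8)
The plan is to reduce the statement to Uhlhorn's version of Wigner's theorem, which asserts that for $\dim H\geq 3$ every bijection of the rank-one projections preserving orthogonality in both directions is implemented by a unitary or antiunitary operator; the whole difficulty will be to extract orthogonality of rank-one projections from the commutativity relation alone. First I would record the easy structural facts, writing $C(P):=\{Q\in\Proj(H)\colon PQ=QP\}$ for the commutant of $P$ in $\Proj(H)$. Since $B(H)$ is a factor, the only projections commuting with every projection are $0$ and $I$, so $\phi(\{0,I\})=\{0,I\}$. Moreover $P$ and $P^\perp$ always satisfy $C(P)=C(P^\perp)$ (a projection commutes with $P$ iff it commutes with $P^\perp$), and conversely, if $P,Q\notin\{0,I\}$ have $C(P)=C(Q)$, then, since the projections in $C(P)$ generate the von Neumann algebra $\{P\}'$, the operator $Q$ must lie in $\{P\}''=\C P+\C P^\perp$, forcing $Q\in\{P,P^\perp\}$. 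Hence $\phi$ descends to a well-defined bijection of the unordered pairs $\{P,P^\perp\}$, which is exactly the source of the two-fold ambiguity $\{UPU^*,UP^\perp U^*\}$ in the conclusion.

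The heart of the argument is to single out the rank-one projections purely in terms of commutativity, equivalently to recognise those pairs $\{P,P^\perp\}$ one of whose members has rank one. Granting such a commutativity-intrinsic description, $\phi$ maps rank-one/corank-one pairs to rank-one/corank-one pairs; since $\dim H\geq 3$ forces rank one and corank one to be genuinely different ranks, each rank-one projection $R$ has a well-defined rank-one representative $\psi(R)$ inside the pair $\{\phi(R),\phi(R)^\perp\}$, and $\psi$ is a bijection of the rank-one projections. For distinct rank-one projections commuting is equivalent to being orthogonal, and a projection commutes with $\phi(R)$ iff it commutes with $\phi(R)^\perp$; thus $R\perp R'\iff\psi(R)\perp\psi(R')$, so $\psi$ preserves orthogonality in both directions. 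Uhlhorn's theorem then supplies a unitary or antiunitary $U$ with $\psi(\px)=\pure{Ux}$ for every unit vector $x$.

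It remains to propagate $U$ to all projections. For a rank-one $R=\px$ and an arbitrary projection $P$, we have that $R$ commutes with $P$ iff $x\in\ran P\cup\ker P$, and this happens iff $\psi(R)=\pure{Ux}$ commutes with $\phi(P)$, that is, iff $Ux\in\ran\phi(P)\cup\ker\phi(P)$. Letting $x$ range over $H$ yields $\ran(UPU^*)\cup\ker(UPU^*)=\ran\phi(P)\cup\ker\phi(P)$ as sets of vectors. Since a subspace contained in the union of two subspaces lies in one of them, the two orthocomplementary pairs must coincide, i.e. $\{\ran\phi(P),\ker\phi(P)\}=\{\ran UPU^*,\ker UPU^*\}$, and therefore $\phi(P)\in\{UPU^*,UP^\perp U^*\}$, as desired.

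I expect the main obstacle to be the step that isolates the rank-one projections from commutativity data. Commutativity does not directly encode either the order or the orthogonality of projections; indeed, as noted above $C(P)\subseteq C(Q)$ already forces $Q\in\{P,P^\perp\}$ for \emph{every} $P\notin\{0,I\}$, so naive maximality arguments cannot detect rank, and one must instead analyse the maximal commuting families (the projection lattices of maximal abelian subalgebras) together with the way their members interact with projections outside the family. The infinite-dimensional case is the delicate one, because there exist maximal abelian subalgebras with no atoms (continuous ones), so any commutativity-only characterisation of rank-one projections must be robust enough to survive the absence of atoms; an alternative route, once the ``points'' and their orthogonality have been recovered, is to appeal to the fundamental theorem of projective geometry instead of Uhlhorn's theorem.
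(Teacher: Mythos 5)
Your overall scaffolding is sound, and two of your three steps are correct as written: the double-commutant argument (the projections in $C(P)$ generate the von Neumann algebra $\{P\}'$, so $C(P)\subseteq C(Q)$ forces $Q\in\{P,P^\perp\}$ and $\phi$ descends to a bijection of the pairs $\{P,P^\perp\}$), and the final propagation step, where two orthocomplementary pairs of subspaces whose unions coincide must coincide as pairs, are both fine, as is the reduction to Uhlhorn's theorem once a rank-one-preserving bijection $\psi$ is in hand. However, there is a genuine gap, and you have named it yourself: you never produce the commutativity-intrinsic characterisation of the rank-one/corank-one pairs $\{R,R^\perp\}$; the entire middle step begins with ``granting such a commutativity-intrinsic description''. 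This is not a removable technicality --- it is the actual content of the theorem. As your own observation shows, commutants of single projections carry no rank information at all ($C(P)\subseteq C(Q)$ already pins $Q$ down to $\{P,P^\perp\}$ for \emph{every} nontrivial $P$), so any such characterisation must exploit the global structure of commuting families; and, exactly as you say, in infinite dimensions there exist atomless maximal abelian von Neumann subalgebras, whose projection lattices are maximal commuting families containing no rank-one projections whatsoever, so maximality arguments cannot work. A proof that stops precisely at this point is a reduction of the theorem to its hardest sub-problem, not a proof of it.

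For comparison: the paper does not prove this statement either --- it quotes it, attributing the proof to \cite[Theorem 4.3]{GeSeCMP} and \cite[Theorem 2.8]{MoS2}. In those references the bulk of the work is devoted precisely to the step you skip: encoding a geometric relation between projections (orthogonality, or comparability up to complementation) purely in terms of the commutativity relation on all of $\Proj(H)$, before any Wigner--Uhlhorn-type theorem can be applied, and doing so in a way that survives the atomless-MASA phenomenon you flag. So your proposal correctly identifies the architecture of the known proofs and correctly locates the difficulty, but it does not overcome it; to complete the argument you would need to supply that characterisation (or follow the cited proofs), after which your steps would indeed assemble into a valid proof.
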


Now, we are in the position to prove our second main result.

\begin{proof}[Proof of Theorem \ref{thm:pres}]
First, note that the bijectivity of $\Phi$ and \eqref{eq:iff-preceq} imply that
$$
    \Phi(\varclass{A}) = \varclass{\Phi(A)} \qquad (A\in\Bsa(H)).
$$
Also, clearly for any $A\in\Bsa(H)$ we have $\Bsa(H)=\{B\colon A\preceq B\}$ if and only if $A\in\{\lambda I\colon\lambda\in\R\}=\varclass{0}$, hence we obtain
$$
    \Phi(\varclass{0}) = \varclass{0}.
$$
For the sake of clarity, from here we split our proof into several parts.
We emphasise that continuity of $\Phi$ in the infinite dimensional case is only exploited in the very last step.

\smallskip

STEP 1: \emph{We show that}
\begin{equation}\label{eq:2-element-form00}
    \Phi(\varclass{tP}) = \varclass{\xi_P(t)F(P)} \qquad (t>0, P\in\Proj(H)\setminus\{0,I\})
\end{equation}
\emph{holds with a bijective map $F\colon\Proj(H)\setminus\{0,I\}\to\Proj(H)\setminus\{0,I\}$ and a family of bijective strictly increasing functions $\{\xi_P\colon(0,\infty)\to(0,\infty)\}_{P\in\Proj(H)\setminus\{0,I\}}$ such that $F(P^\perp) = F(P)^\perp$ and $\xi_P=\xi_{P^\perp}$ for all $P$.}

Let us introduce the notation $A^\succeq := \{B\in\Bsa(H)\colon A\succeq B\}$. Obviously, we have $\Phi(A^\succeq) = \Phi(A)^\succeq$ for all $A\in\Bsa(H)$. We claim that the spectrum of $A$ contains exactly $2$ elements if and only if
\begin{equation}\label{eq:2-spec}
    A^\succeq \neq \varclass{A} \text{ and for all } B_1,B_2\in A^\succeq \text{ we have } B_1\preceq B_2 \text{ or } B_1\succeq B_2.
\end{equation}
Note that \eqref{eq:2-spec} clearly does not hold for scalar multiples of the identity.
In case $\#\sigma(A) = 2$, then $\varclass{A} = \varclass{tP}$ holds with some $t>0, P\in\Proj(H)\setminus\{0,I\}$. By Theorem \ref{thm:Lip1} it is then straightforward that 
\begin{equation}\label{eq:2-spec-succeq}
    (tP)^\succeq = (tP^\perp)^\succeq = A^\succeq = \cup_{0\leq s\leq t}\varclass{sP},    
\end{equation}
from which \eqref{eq:2-spec} follows.
For the other direction, assume that $\sigma(A)$ contains at least three different elements, say $\lambda_1<\lambda_2<\lambda_3$. Define the functions
$$
    f(x) = \left\{
    \begin{matrix}
    0 & \text{if } x\leq\lambda_2 \\
    x-\lambda_2 & \text{if } x>\lambda_2
    \end{matrix}
    \right.,
    \quad 
    g(x) = \left\{
    \begin{matrix}
    x-\lambda_2 & \text{if } x\leq\lambda_2 \\
    0 & \text{if } x>\lambda_2
    \end{matrix}
    \right.
$$
and note that $f,g\in\Lip_1(\R)$, hence $f(A),g(A)\in A^\succeq$. Notice that $f(A)\preceq g(A)$ would imply $f(x)=h(g(x))$ for all $x\in\sigma(A)$ with some $h\in\Lip_1(\R)$. However, substituting $x=\lambda_2$ and $x=\lambda_3$ shows that this cannot be satisfied. We see similarly that $f(A)\succeq g(A)$ cannot hold, and therefore that \eqref{eq:2-spec} is not fulfilled which proves our claim.
Note that this implies that if $\#\sigma(A)=2$ then 
$\#\sigma(\Phi(A))=2$, and so \eqref{eq:2-element-form00} must hold. It's clear that $F(P^\perp) = F(P)^\perp$ and $\xi_P=\xi_{P^\perp}$ for all $P$, and from \eqref{eq:2-spec-succeq} we see that each
$\xi_P$ must be a strictly increasing bijection.

\smallskip

STEP 2: \emph{We prove that there exist a unitary or antiunitary operator $U\colon H\to H$ and a family of bounded Borel functions $\{f_A\colon\sigma(A)\to\R\}_{A\in\Bsa(H)}$ such that}
\begin{equation}\label{eq:comm-form}
    \Phi(A) = Uf_A(A)U^* \qquad (A\in\Bsa(H)).
\end{equation}

Note that for all $P,Q\in\Proj(H)\setminus\{0,I\}$ we have
\begin{align*}
    PQ=QP & \iff \exists \;C\in\Bsa(H)\colon P,Q\in C^\succeq \\
    & \iff \exists \;D\in\Bsa(H)\colon \xi_P(1)F(P),\xi_Q(1)F(Q)\in D^\succeq \\
    & \iff \xi_P(1)F(P)\cdot\xi_Q(1)F(Q)=\xi_Q(1)F(Q)\cdot\xi_P(1)F(P) \\
    & \iff F(P)F(Q)=F(Q)F(P).
\end{align*}
where we used Lemma \ref{lem:comm}. Let us extend $F$ to $\Proj(H)$ by setting $F(0)=0$ and $F(I)=I$. Clearly, this $F$ also preserves commutativity, hence by Theorem \ref{thm:ProjComm} we obtain that 
\begin{equation}\label{eq:2-element-form0}
    \Phi(\varclass{tP}) = \varclass{\xi_P(t)UPU^*} \qquad (t>0, P\in\Proj(H)\setminus\{0,I\})
\end{equation}
holds with a unitary or antiunitary operator $U$.

Next, by Lemma \ref{lem:comm} we obtain that for all $A,B\in\Bsa(H)$
\begin{align*}
    AB=BA & \iff \forall \;\text{spectral projections } P \text{ of } A\colon PB=BP \\
    & \iff \forall \;\text{sp.~proj.~} P \text{ of } A, \;\exists \; C_P\in\Bsa(H)\colon P,B\in (C_P)^\succeq \\
    & \iff \forall \;\text{sp.~proj.~} P \text{ of } A, \;\exists \; D_P\colon \xi_P(1)UPU^*,\Phi(B)\in (D_P)^\succeq \\
    & \iff \forall \;\text{sp.~proj.~} P \text{ of } A\colon UPU^*\cdot \Phi(B)=\Phi(B)\cdot UPU^* \\
    & \iff A\cdot U^*\Phi(B)U = U^*\Phi(B)U\cdot A.
\end{align*}
This implies that $U^*\Phi(B)U$ is in the double commutant of $B$, therefore (as $H$ is separable) there exists a bounded Borel function $f_B\colon\sigma(B)\to\R$ such that $\Phi(B) = Uf_B(B)U^*$, which proves \eqref{eq:comm-form}. Note that we could have arrived at the same conclusion by applying \cite[Corollary 2]{MoSe-comm}, the reason we did not do that is that we found the above approach shorter and more direct.

As we have not used continuity yet, a similar statement holds for $\Phi^{-1}$, therefore we obtain 
\begin{equation}\label{eq:spec-card-pres}
    \#\sigma(A) = \#\sigma(\Phi(A)) \qquad (A\in\Bsa(H)).
\end{equation}
Observe that without loss of generality we may, and from now on will, assume that $U=I$, since $\Phi$ satisfies the required properties if and only if $A\mapsto U^*\Phi(A)U$ does. We shall show that this assumption implies the existence of a positive $\alpha$ such that $\Phi\left(\varclass{A}\right) = \varclass{\alpha A}$ for all $A$.

\smallskip

STEP 3: \emph{We show that}
\begin{equation*}
    \Phi(\varclass{tP}) = \varclass{\xi(t)P} \qquad (t>0, P\in\Proj(H)\setminus\{0,I\}),
\end{equation*}
\emph{holds with a bijective strictly increasing function $\xi\colon(0,\infty)\to(0,\infty)$.}

In order to see this, we only need to prove that $\xi_P=\xi_Q$ for all non-trivial projections. We already know this when $Q=P^\perp$. Next, assume that $P$ and $Q$ are orthogonal to each other but $P+Q\neq I$. For all $t_2\geq t_1>0$ set $R:=I-P-Q$ and $A_{t_1,t_2} := 0P+t_1Q+(t_1+t_2)R$. By \eqref{eq:3-spec-2-below} we obtain
\begin{align}\label{eq:At1t2}
    & \{C\in\Bsa(H)\colon C\preceq \Phi(A_{t_1,t_2}), \#\sigma(C)\leq 2\} \nonumber\\
    & = \Phi\left(\{B\in\Bsa(H)\colon B\preceq A_{t_1,t_2}, \#\sigma(B)\leq 2\}\right) \nonumber\\
    & = \left(\bigcup_{0\leq r\leq t_1}\left(\varclass{\xi_P(r)P}\cup\varclass{\xi_Q(r)Q}\right)\right)\bigcup\left(\bigcup_{0\leq s\leq t_2}\varclass{\xi_R(s)R}\right).
\end{align}
Moreover, since $\#\sigma(\Phi(A_{t_1,t_2}))=3$, the structure of the right-hand side of \eqref{eq:3-spec-2-below} implies that at least two of the following three quantities must coincide: $\xi_P(t_1),\xi_Q(t_1),\xi_R(t_2)$. Note that this holds for all $t_2\geq t_1>0$. Letting $t_2$ converge to $\infty$, we conclude that $\xi_P=\xi_Q$.

Now, note that if $P,Q$ are two different rank-$1$ projections, then there exists a third rank-$1$ projection $R$ which is orthogonal to both $P$ and $Q$. By the above paragraph, we therefore have $\xi_P=\xi_R=\xi_Q$. Finally, for any
$P,Q\in\Proj(H)\setminus\{0,I\}$, we take rank-$1$ projections
$P',Q'$ which are orthogonal to $P,Q$ respectively and observe
that $\xi_P=\xi_{P'}=\xi_{Q'}=\xi_Q$, which completes the proof
of this step.




\smallskip

STEP 4: \emph{We show that $\xi(t) = \alpha t$ with some positive constant $\alpha$.}

Fix three pairwise orthogonal projections $P_0,P_1,P_2$ such that $P_0+P_1+P_2=I$. We introduce the notation $A^{(i,j,k)}_{t_1,t_2} := 0P_i+t_1P_j+(t_1+t_2)P_k$ for $t_1,t_2>0$ and any permutation $(i,j,k)$ of $\{0,1,2\}$. In case when we further have $t_1=t_2$, then we shall simply write $A^{(i,j,k)}_{t_1}$ instead of $A^{(i,j,k)}_{t_1,t_1}$. Note that $\varclass{A^{(i,j,k)}_{t_1,t_2}} = \varclass{A^{(k,j,i)}_{t_2,t_1}}$.

Let $t>0$ be an arbitrary positive number.
As was noted after Lemma \ref{lem:3-spec}, we have $\varclass{A} = \varclass{A^{(i,j,k)}_{t}}$ with some permutation $(i,j,k)$ if and only if $\#\sigma(A)=3$ and
\begin{align*}
    \{B\in\Bsa(H)\colon B\preceq A, \#\sigma(B)\leq 2\}
    = \bigcup_{\substack{0\leq r\leq t \\ l\in\{0,1,2\}}} \varclass{rP_l}.
\end{align*}
Note that $\#\sigma(\Phi(A))=3$ and that the $\Phi$-image of the above set is 
\begin{align*}
    \{B\in\Bsa(H)\colon B\preceq \Phi(A), \#\sigma(B)\leq 2\}
    = \bigcup_{\substack{0\leq r\leq \xi(t) \\ l\in\{0,1,2\}}} \varclass{rP_l}.
\end{align*}
Therefore, we conclude that $\varclass{A} = \varclass{A^{(i,j,k)}_{t}}$ with some permutation $(i,j,k)$ if and only if $\varclass{\Phi(A)} = \varclass{A^{(i',j',k')}_{\xi(t)}}$ with some permutation $(i',j',k')$.
In particular,
$$
\Phi\left( \varclass{A^{(0,1,2)}_{t}}\cup\varclass{A^{(2,0,1)}_{t}}\cup\varclass{A^{(1,2,0)}_{t}} \right) = \varclass{A^{(0,1,2)}_{\xi(t)}}\cup\varclass{A^{(2,0,1)}_{\xi(t)}}\cup\varclass{A^{(1,2,0)}_{\xi(t)}}.
$$
Note that comparing the distance between the spectrum points gives the following chain of equivalences for any $s>0$: we have $2t\leq s$ if and only if
$$
    A^{(i_1,j_1,k_1)}_{t}\preceq A^{(i_2,j_2,k_2)}_{s}
$$
is satisfied for all permutations $(i_1,j_1,k_1), (i_2,j_2,k_2)$, which holds exactly when 
$$
    A^{(i_1,j_1,k_1)}_{\xi(t)}\preceq A^{(i_2,j_2,k_2)}_{\xi(s)}
$$
is fulfilled for all permutations $(i_1,j_1,k_1), (i_2,j_2,k_2)$, and this holds if and only if $2\xi(t)\leq \xi(s)$. This immediately implies $\xi(2t) = 2\xi(t)$ for all $t>0$, which by an elementary algebraic and continuity argument gives $\xi(t) = \alpha t$ with some positive number $\alpha$.

Note that $\Phi$ satisfies our conditions if and only if $\frac{1}{\alpha}\Phi$ does, therefore without loss of generality we may, and from now on will, assume that 
\begin{equation}\label{eq:2-element-form}
    \Phi(\varclass{tP}) = \varclass{tP} \qquad (t>0, P\in\Proj(H)\setminus\{0,I\}).
\end{equation}
We shall see that this assumption implies that $\Phi\left(\varclass{A}\right) = \varclass{A}$ for all $A$.

\smallskip

STEP 5: \emph{We show that $\Phi\left(\varclass{A}\right) = \varclass{A}$ holds for all $A$ with $\#\sigma(A)=3$.}

We continue to use the notation of the previous step. Further to the projections $P_0,P_1,P_2$, let us fix $t_0>0$. By Lemma \ref{lem:3-spec}, \eqref{eq:spec-card-pres} and \eqref{eq:2-element-form}, we have
\begin{equation}\label{eq:t<t0}
    \Phi\left(\varclass{A_{t_0,t}^{(0,1,2)}}\right) = \varclass{A_{t_0,t}^{(0,1,2)}} \;\;\text{or}\;\; \Phi\left(\varclass{A_{t_0,t}^{(0,1,2)}}\right) = \varclass{A_{t_0,t}^{(0,2,1)}} \quad (0<t<t_0),
\end{equation}
and
\begin{equation}\label{eq:t>t0}
    \Phi\left(\varclass{A_{t_0,t}^{(0,1,2)}}\right) = \varclass{A_{t_0,t}^{(0,1,2)}}
    \;\;\text{or}\;\;
    \Phi\left(\varclass{A_{t_0,t}^{(0,1,2)}}\right) =\varclass{A_{t_0,t}^{(1,0,2)}} \quad (t>t_0).
\end{equation}

First, we claim that in \eqref{eq:t<t0}, we either have the first option for all $0<t<t_0$, or the second for all $0<t<t_0$. Suppose this is not so, then there exist $0<t'<t<t_0$ such that for $t'$ the first option holds and for $t$ the second, or the other way around. We only show that the former case is impossible, as the latter is very similar to handle. Since $A_{t_0,t}^{(0,1,2)}\succeq A_{t_0,t'}^{(0,1,2)}$, we must have $A_{t_0,t}^{(0,2,1)}\succeq A_{t_0,t'}^{(0,1,2)}$. However a consideration of the distances between the eigenvalues corresponding to $P_0$ and $P_2$ gives the contradiction $t_0 \geq t_0+t'$. This proves our claim.

Second, we show that in \eqref{eq:t>t0}, we either have the first option for all $t>t_0$, or the second for all $t>t_0$. If this were not so, then it would be possible to find $t$ and $t'$ with $t_0+t>t'>t>t_0$ such that for $t'$ the first option holds and for $t$ the second, or the other way around. The former case would imply $A_{t_0,t}^{(1,0,2)}\preceq A_{t_0,t'}^{(0,1,2)}$, since $A_{t_0,t}^{(0,1,2)}\preceq A_{t_0,t'}^{(0,1,2)}$. However, comparing the distances between the eigenvalues corresponding to $P_1$ and $P_2$ gives $t_0+t \leq t'$, which cannot be. The latter case implies a similar contradiction.

Third, we prove that 
\begin{equation}\label{eq:allt}
    \Phi\left(\varclass{A_{t_0,t}^{(0,1,2)}}\right) = \varclass{A_{t_0,t}^{(0,1,2)}} \quad (t>0),
\end{equation}
which will finish this step. Choose $t$ and $t'$ such that $0<t'<t_0<t<t_0+t'$ holds. The inequality $A_{t_0,t'}^{(0,1,2)}\preceq A_{t_0,t}^{(0,1,2)}$ implies one of the following, depending on which cases hold in \eqref{eq:t<t0} and \eqref{eq:t>t0}:
\begin{itemize}
    \item $A_{t_0,t'}^{(0,2,1)}\preceq A_{t_0,t}^{(1,0,2)}$, which gives $t_0+t' \leq t_0$, that is impossible,
    \item $A_{t_0,t'}^{(0,2,1)}\preceq A_{t_0,t}^{(0,1,2)}$, which gives $t_0+t' \leq t_0$, that is also impossible,
    \item $A_{t_0,t'}^{(0,1,2)}\preceq A_{t_0,t}^{(1,0,2)}$, which gives $t_0+t' \leq t$, that is again impossible,
    \item $A_{t_0,t'}^{(0,1,2)}\preceq A_{t_0,t}^{(0,1,2)}$, which therefore must be the case.
\end{itemize}
By the previous two paragraphs, this proves that \eqref{eq:allt} must hold for all $t\neq t_0$. We saw in the previous step that for $t=t_0$ we have either $\Phi\left(\varclass{A_{t_0}^{(0,1,2)}}\right)=\varclass{A_{t_0}^{(0,1,2)}}$, or $\Phi\left(\varclass{A_{t_0}^{(0,1,2)}}\right)=\varclass{A_{t_0}^{(2,0,1)}}$, or $\Phi\left(\varclass{A_{t_0}^{(0,1,2)}}\right)=\varclass{A_{t_0}^{(1,2,0)}}$.
However, since for all $t_0<t<2t_0$ we have 
$$
A_{t_0}^{(0,1,2)} \preceq A_{t_0,t}^{(0,1,2)},\;\;
A_{t_0}^{(2,0,1)} \not\preceq A_{t_0,t}^{(0,1,2)},\;\;
A_{t_0}^{(1,2,0)} \not\preceq A_{t_0,t}^{(0,1,2)},
$$
and $\Phi$ preserves $\preceq$, we obtain $\Phi\left(\varclass{A_{t_0}^{(0,1,2)}}\right)=\varclass{A_{t_0}^{(0,1,2)}}$, that completes the proof of \eqref{eq:allt}.

\smallskip

STEP 6: \emph{We show $\Phi\left(\varclass{A}\right) = \varclass{A}$ for all $A$.}

Lemma \ref{lem:for-ind} implies $\Phi\left(\varclass{A}\right) = \varclass{A}$ for all $A$ with $\#\sigma(A)<\infty$. This in turn completes the proof in the finite dimensional case. For the infinite dimensional case we use a straightforward continuity argument to finish the proof. Finally, note that throughout the proof we transformed our original $\Phi$ twice (see at the end of STEPs 2 and 4). If we transform our obtained map back, one arrives at the form \eqref{eq:pres-form}.
\end{proof}

\section*{Acknowledgement}
Geh\'er was supported by the Leverhulme Trust Early Career Fellowship (ECF-2018-125), and also by the Hungarian National Research, Development and Innovation
Office - NKFIH (grant no. K134944).

\end{document}